\documentclass{article}

\usepackage{amsfonts}
\usepackage{amsthm}
\usepackage{amsmath}
\usepackage{geometry}
\usepackage{tikz}
\usepackage{tikz-cd}
\usepackage[all]{xy}
\usetikzlibrary{matrix, calc, arrows}
\usepackage[varg]{txfonts}
\usepackage{bm}
\usepackage{enumitem}
\usepackage{indentfirst}
\usepackage{comment}
\usepackage{ulem}
\usepackage{cases}
\usepackage{authblk}
\usepackage{hyperref}

\newtheorem{thm}{Theorem}[section]
\newtheorem{rem}[thm]{Remark}

\newtheorem{prop}[thm]{Proposition}

\newtheorem{cor}[thm]{Corollary}

\newtheorem{prop-defi}[thm]{Proposition and Definition}

\theoremstyle{definition}
\newtheorem{defn}[thm]{Definition}

\def\Zt{{\mathbb{Z}}}
\def\NN{{\mathbb{N}}}

\DeclareMathOperator{\im}{\mathrm{im}}

\DeclareMathOperator{\obj}{\mathrm{obj}}
\DeclareMathOperator{\Hom}{\mathrm{Hom}}

\newcommand{\sets}{\mathbf{Sets}}
\newcommand{\hgrh}{\mathbf{hGraph}}
\newcommand{\grh}{\mathbf{Graph}}
\newcommand{\htop}{\mathbf{hTop}}
\newcommand{\id}{\mathrm{id}}

\title{Mapping fiber graphs and loop graphs in naive discrete homotopy theory}
\author[1]{So Yamagata\thanks{so.yamagata@fukuoka-u.ac.jp}}
\affil[1]{Department of Applied Mathematics, Faculty of Science, Fukuoka University}
\date{}

\begin{document}

\maketitle
\begin{abstract}
    Discrete homotopy theory or $A$-homotopy theory is a combinatorial homotopy theory defined on graphs, simplicial complexes, and metric spaces, reflecting information about their connectivity.
    The present paper aims to further understand the (non-)similarities between the $A$-homotopy and ordinary homotopy theories through explicit constructions.
    More precisely, we define mapping fiber graphs and study their basic properties yielding, under a technical condition, a discrete analogous of Puppe sequence in a naive discrete homotopy theory.
\end{abstract}

\renewcommand{\thefootnote}{}
\footnotetext{
    \noindent
    \textit{2020 Mathematics Subject Classification}: 05C25(primary), 55P10(secondary)}
\footnotetext{
    \textit{Key words}: discrete homotopy theory, graph theory, mapping fiber
}\renewcommand{\thefootnote}{\arabic{footnote}}

\section{Introduction}
Discrete homotopy theory or $A$-homotopy theory is a combinatorial homotopy theory defined on graphs, simplicial complexes, and metric spaces, reflecting information about their connectivity.
The origins of discrete homotopy theory can be traced back to the pioneering work of physicist Atkin \cite{Atka}, \cite{Atkb}, who utilized simplicial complexes to model social networks, thereby capturing the essence of connectivity within partial networks.

In \cite{BKLW}, the foundational work of the $A$-homotopy groups defined on simplicial complexes and graphs was introduced, demonstrating that these groups are isomorphic.
This isomorphism was established through the association of ``connectivity graphs,'' which were constructed from the faces of a simplicial complex, where vertices represent the faces and edges connect faces that share a common lower-dimensional face.
This construction brings us back to the study of $A$-homotopy groups of graphs.

The development of $A$-homotopy theory for graphs was further advanced in \cite{BBLL}, which associated a cubical set with a graph and examined the homotopy group of this cubical set.
They conjectured that under certain conditions, called the cubical approximation property, the $A$-homotopy group of a graph would be isomorphic to the homotopy group of the geometric realization of the associated cubical sets. This conjecture was confirmed in 2022 by Carranza and Kapulkin \cite{CKa}, \cite{CKb}, who provided a category-theoretic proof that did not rely on the cubical approximation property developing the theory of homotopy theory of cubical sets.

The fundamental properties of $A$-homotopy groups, including the vanishing property \cite{Lb}, lifting property \cite{Mo}, and the relationships between fundamental groupoids and covering graphs \cite{KMb}, have been extensively studied.
Notably, it was shown in \cite{CKc} that colimits do not exist within naive discrete homotopy theory, which was characterized by $A$-homotopy equivalences.
The $(\infty, 1)$-category-theoretic construction of discrete homotopy theory was introduced in \cite{CKL}.
More recently, the fibration structure in the category of graphs has been studied in \cite{KMa}.

From a broader perspective, associated homology theories have also been developed in the context of $A$-homotopy theory.
In \cite{BCW}, the notion of $A$-homotopy in metric spaces and its associated ``discrete (cubical) homology'' groups were introduced.
The relationship between these homology groups and the $A$-homotopy groups of metric spaces has been explored, contributing to the growing body of literature on discrete (cubical) homology theories (see \cite{BCW}, \cite{BGJWa}, \cite{BGJWb}, \cite{BGJWc} and \cite{KK} for the literature on the ``discrete (cubical) homology theories'').

The potential applications of $A$-homotopy theory are significant and varied.
For instance, in \cite{BS}, a formula for the Betti number of the complement of 3-equal arrangements was derived combinatorially by computing the $A$-fundamental group for ordered complexes of the Boolean lattices.
Additionally, \cite{BSW} examined the relationship between the homotopy group of the complement of the $k$-parabolic subspace arrangement of the real reflection group and the discrete homotopy group of the Coxeter complex.
In particular, it gives another proof of the $K(\pi, 1)$-ness of the complement of the real 3-equal subspace arrangement proved by Khovanov \cite{K}.

The primary purpose of the present paper is to enhance the understanding of the (non-)similarities between $A$-homotopy and classical homotopy theories through explicit constructions.
Specifically, we will define the notion of a mapping fiber within the category of graphs equipped with $A$-homotopies and investigate its basic properties.

The organization of the present paper is as follows:
Section 2 reviews the basic notions and properties of $A$-homotopy theory and defines the naive discrete homotopy theory.
Section 3 introduces the mapping fiber graph and studies its basic properties, under a technical condition, yielding a discrete analogous Puppe sequence in a naive discrete homotopy theory.
\paragraph*{Acknowledgement}
The author would like to thank Takuya Saito for many helpful discussions. This work was supported by JSPS KAKENHI Grant Number JP24K16926.
\section{Discrete homotopy theory of graphs}
In this section, we review the discrete homotopy theory of graphs.
For the material, we primarily refer to \cite{BBLL} and \cite{CKa} (see also \cite{BKLW} for the original treatment and \cite{BL} for an overview of the theory).

We assume a graph $G = (V_G, E_G)$ with vertex set $V_G$ and (undirected) edge set $E_G$ to be simple, i.e., with no loops and no multiple edges.
\begin{defn}
    \begin{itemize}
        \item For a graph $G = (V_G, E_G)$, if two vertices $u_1, u_2 \in V_G$ are connected by an edge, i.e., $u_1u_2 \in E_G$ we say that $u_1$ and $u_2$ are \textbf{adjacent} and write $u_1 \sim u_2$.
        \item For graphs $G = (V_G, E_G)$, $H = (V_H, E_H)$, the \textbf{Cartesian product} $G \otimes H$ is the graph $(V_{G \otimes H}, E_{G \otimes H})$ defined as follows.
              The vertex set is defined by $V_{G \otimes H} = V_G \times V_H$, and for two vertices $(u_1, v_1), (u_2, v_2)$ $\in V_{G \otimes H}$, $(u_1, v_1) \sim (u_2, v_2)$ if $u_1 = u_2$ and $v_1 \sim v_2$, or $u_1 \sim u_2$ and $v_1 = v_2$.
        \item For graphs $G = (V_G, E_G)$, $H = (V_H, E_H)$,
              a \textbf{graph map} $f: G \to H$ is a set map $V_G \to V_H$ such that if $u_1 \sim u_2$, then either $f(u_1) = f(u_2)$ or $f(u_1) \sim f(u_2)$.
    \end{itemize}
\end{defn}
In the rest, we write $u \in G$ instead of $u \in V_G$ as an abuse of notation.
\begin{defn}
    \begin{itemize}
        \item For an integer $m \geq 0$, define $I_m$ as a graph with $m+1$ vertices labeled $0, 1, \dots, m$ and $m$ edges $i \ i + 1$ for $i = 0, 1, \dots, m - 1$.
              We allow $m$ to be (countable) infinite.
              In this case, $I_\infty$ is the graph with vertices labeled $i \in \Zt$ and edges $i \ i + 1$, for $i \in \Zt$.
        \item For $n, M \geq 0$, let $I_{\geq M}^{\otimes n}$ denote an induced subgraph of $I_{\infty}^{\otimes n}$ with vertices $(u_1, u_2, \dots, u_n)$, where $|u_i| \geq M$ for some $i$ (see Figures \ref{fig:I_infty}, \ref{fig:I_geqM}).
    \end{itemize}
\end{defn}
\begin{figure}[h]
    \centering
    \begin{minipage}[b]{0.45\linewidth}
        \centering
        \raisebox{0.3cm}{
            \begin{tikzpicture}
                \fill (0,0) circle (1.5pt);
                \fill (0,0.5) circle (1.5pt);
                \fill (0,1) circle (1.5pt);
                \fill (0,1.5) circle (1.5pt);
                \fill (0,-0.5) circle (1.5pt);
                \fill (0,-1) circle (1.5pt);
                \fill (0,-1.5) circle (1.5pt);
                \fill (0.5,0) circle (1.5pt);
                \fill (0.5,0.5) circle (1.5pt);
                \fill (0.5,1) circle (1.5pt);
                \fill (0.5,1.5) circle (1.5pt);
                \fill (0.5,-0.5) circle (1.5pt);
                \fill (0.5,-1) circle (1.5pt);
                \fill (0.5,-1.5) circle (1.5pt);
                \fill (1,0) circle (1.5pt);
                \fill (1,0.5) circle (1.5pt);
                \fill (1,1) circle (1.5pt);
                \fill (1,1.5) circle (1.5pt);
                \fill (1,-0.5) circle (1.5pt);
                \fill (1,-1) circle (1.5pt);
                \fill (1,-1.5) circle (1.5pt);
                \fill (1.5,0) circle (1.5pt);
                \fill (1.5,0.5) circle (1.5pt);
                \fill (1.5,1) circle (1.5pt);
                \fill (1.5,1.5) circle (1.5pt);
                \fill (1.5,-0.5) circle (1.5pt);
                \fill (1.5,-1) circle (1.5pt);
                \fill (1.5,-1.5) circle (1.5pt);
                \fill (-0.5,0) circle (1.5pt);
                \fill (-0.5,0.5) circle (1.5pt);
                \fill (-0.5,1) circle (1.5pt);
                \fill (-0.5,1.5) circle (1.5pt);
                \fill (-0.5,-0.5) circle (1.5pt);
                \fill (-0.5,-1) circle (1.5pt);
                \fill (-0.5,-1.5) circle (1.5pt);
                \fill (-1,0) circle (1.5pt);
                \fill (-1,0.5) circle (1.5pt);
                \fill (-1,1) circle (1.5pt);
                \fill (-1,1.5) circle (1.5pt);
                \fill (-1,-0.5) circle (1.5pt);
                \fill (-1,-1) circle (1.5pt);
                \fill (-1,-1.5) circle (1.5pt);
                \fill (-1.5,0) circle (1.5pt);
                \fill (-1.5,0.5) circle (1.5pt);
                \fill (-1.5,1) circle (1.5pt);
                \fill (-1.5,1.5) circle (1.5pt);
                \fill (-1.5,-0.5) circle (1.5pt);
                \fill (-1.5,-1) circle (1.5pt);
                \fill (-1.5,-1.5) circle (1.5pt);
                \draw (-1.75,0) -- (1.75,0);
                \draw (-1.75,0.5) -- (1.75,0.5);
                \draw (-1.75,1) -- (1.75,1);
                \draw (-1.75,1.5) -- (1.75,1.5);
                \draw (-1.75,-0.5) -- (1.75,-0.5);
                \draw (-1.75,-1) -- (1.75,-1);
                \draw (-1.75,-1.5) -- (1.75,-1.5);
                \draw (0,-1.75) -- (0,1.75);
                \draw (0.5,-1.75) -- (0.5,1.75);
                \draw (1,-1.75) -- (1,1.75);
                \draw (1.5,-1.75) -- (1.5,1.75);
                \draw (-0.5,-1.75) -- (-0.5,1.75);
                \draw (-1,-1.75) -- (-1,1.75);
                \draw (-1.5,-1.75) -- (-1.5,1.75);
                \node at (0, -1.9) {$\vdots$}; \node at (0.5, -1.9) {$\vdots$}; \node at (1, -1.9) {$\vdots$}; \node at (1.5, -1.9) {$\vdots$}; \node at (-0.5, -1.9) {$\vdots$}; \node at (-1, -1.9) {$\vdots$}; \node at (-1.5, -1.9) {$\vdots$}; \node at (0, 2.1) {$\vdots$}; \node at (0.5, 2.1) {$\vdots$}; \node at (1, 2.1) {$\vdots$}; \node at (1.5, 2.1) {$\vdots$}; \node at (-0.5, 2.1) {$\vdots$}; \node at (-1, 2.1) {$\vdots$}; \node at (-1.5, 2.1) {$\vdots$};
                \node at (-2, 0) {$\dots$}; \node at (-2, 0.5) {$\dots$}; \node at (-2, 1) {$\dots$}; \node at (-2, 1.5) {$\dots$}; \node at (-2, -0.5) {$\dots$}; \node at (-2, -1) {$\dots$}; \node at (-2, -1.5) {$\dots$}; \node at (2.1, 0) {$\dots$}; \node at (2.1, 0.5) {$\dots$}; \node at (2.1, 1) {$\dots$}; \node at (2.1, 1.5) {$\dots$}; \node at (2.1, -0.5) {$\dots$}; \node at (2.1, -1) {$\dots$}; \node at (2.1, -1.5) {$\dots$};
            \end{tikzpicture}}\caption{The graph $I_{\infty}^{\otimes 2}$.}\label{fig:I_infty}

    \end{minipage}
    \begin{minipage}[b]{0.45\linewidth}
        \centering
        \begin{tikzpicture}
            \fill (0,1) circle (1.5pt);
            \fill (0,1.5) circle (1.5pt);
            \fill (0,-1) circle (1.5pt);
            \fill (0,-1.5) circle (1.5pt);
            \fill (0.5,1) circle (1.5pt);
            \fill (0.5,1.5) circle (1.5pt);
            \fill (0.5,-1) circle (1.5pt);
            \fill (0.5,-1.5) circle (1.5pt);
            \fill (1,0) circle (1.5pt);
            \fill (1,0.5) circle (1.5pt);
            \fill (1,1) circle (1.5pt);
            \fill (1,1.5) circle (1.5pt);
            \fill (1,-0.5) circle (1.5pt);
            \fill (1,-1) circle (1.5pt);
            \fill (1,-1.5) circle (1.5pt);
            \fill (1.5,0) circle (1.5pt);
            \fill (1.5,0.5) circle (1.5pt);
            \fill (1.5,1) circle (1.5pt);
            \fill (1.5,1.5) circle (1.5pt);
            \fill (1.5,-0.5) circle (1.5pt);
            \fill (1.5,-1) circle (1.5pt);
            \fill (1.5,-1.5) circle (1.5pt);
            \fill (-0.5,1) circle (1.5pt);
            \fill (-0.5,1.5) circle (1.5pt);
            \fill (-0.5,-1) circle (1.5pt);
            \fill (-0.5,-1.5) circle (1.5pt);
            \fill (-1,0) circle (1.5pt);
            \fill (-1,0.5) circle (1.5pt);
            \fill (-1,1) circle (1.5pt);
            \fill (-1,1.5) circle (1.5pt);
            \fill (-1,-0.5) circle (1.5pt);
            \fill (-1,-1) circle (1.5pt);
            \fill (-1,-1.5) circle (1.5pt);
            \fill (-1.5,0) circle (1.5pt);
            \fill (-1.5,0.5) circle (1.5pt);
            \fill (-1.5,1) circle (1.5pt);
            \fill (-1.5,1.5) circle (1.5pt);
            \fill (-1.5,-0.5) circle (1.5pt);
            \fill (-1.5,-1) circle (1.5pt);
            \fill (-1.5,-1.5) circle (1.5pt);
            \draw (-1.75,0) -- (-1,0);
            \draw (1,0) -- (1.75,0);
            \draw (-1.75,0.5) -- (-1,0.5);
            \draw (1,0.5) -- (1.75,0.5);
            \draw (-1.75,1) -- (1.75,1);
            \draw (-1.75,1.5) -- (1.75,1.5);
            \draw (-1.75,-0.5) -- (-1,-0.5);
            \draw (1,-0.5) -- (1.75,-0.5);
            \draw (-1.75,-1) -- (1.75,-1);
            \draw (-1.75,-1.5) -- (1.75,-1.5);
            \draw (0,-1.75) -- (0,-1);
            \draw (0,1) -- (0,1.75);
            \draw (0.5,-1.75) -- (0.5,-1);
            \draw (0.5,1) -- (0.5,1.75);
            \draw (1,-1.75) -- (1,1.75);
            \draw (1.5,-1.75) -- (1.5,1.75);
            \draw (-0.5,-1.75) -- (-0.5,-1);
            \draw (-0.5,1) -- (-0.5,1.75);
            \draw (-1,-1.75) -- (-1,1.75);
            \draw (-1.5,-1.75) -- (-1.5,1.75);
            \node at (0, -2.4) {$\vdots$}; \node at (0.5, -1.9) {$\vdots$}; \node at (1, -2.4) {$\vdots$}; \node at (1.5, -1.9) {$\vdots$}; \node at (-0.5, -1.9) {$\vdots$}; \node at (-1, -2.4) {$\vdots$}; \node at (-1.5, -1.9) {$\vdots$}; \node at (0, 2.1) {$\vdots$}; \node at (0.5, 2.1) {$\vdots$}; \node at (1, 2.1) {$\vdots$}; \node at (1.5, 2.1) {$\vdots$}; \node at (-0.5, 2.1) {$\vdots$}; \node at (-1, 2.1) {$\vdots$}; \node at (-1.5, 2.1) {$\vdots$};
            \node at (-2, 0) {$\dots$}; \node at (-2, 0.5) {$\dots$}; \node at (-2, 1) {$\dots$}; \node at (-2, 1.5) {$\dots$}; \node at (-2, -0.5) {$\dots$}; \node at (-2, -1) {$\dots$}; \node at (-2, -1.5) {$\dots$}; \node at (2.6, 0) {$\dots$}; \node at (2.1, 0.5) {$\dots$}; \node at (2.6, 1) {$\dots$}; \node at (2.1, 1.5) {$\dots$}; \node at (2.1, -0.5) {$\dots$}; \node at (2.6, -1) {$\dots$}; \node at (2.1, -1.5) {$\dots$};
            \node at (2.1,0) {0};
            \node at (2.1,1) {$M$};
            \node at (2.1,-1) {$-M$};
            \node at (0,-2) {0};
            \node at (1,-2) {$M$};
            \node at (-1,-2) {$-M$};
        \end{tikzpicture}\caption{The graph $I_{\geq M}^{\otimes 2}$.}\label{fig:I_geqM}
    \end{minipage}
\end{figure}
We define an equivalence relation on the set of graph maps by the following.
\begin{defn}
    Two graph maps $f, g: G \to H$ are called \textbf{$A$-homotopic} and denoted by $f \simeq g$ if there exists an integer $m \geq 0$ and a graph map
    \begin{equation*}
        F : G \otimes I_m \to H
    \end{equation*}
    such that $F(\bullet, 0) = f$, $F(\bullet, m) = g$.
    We call the map $F$ an \textbf{$A$-homotopy from $f$ to $g$}.
\end{defn}

The $A$-homotopy equivalence measures a ``combinatorial'' invariant of graph maps that differs from the ordinary homotopy; in the $A$-homotopy sense, it is not a topological invariant of graphs regarded as 1-dimensional simplicial complexes.
One typical example showing the difference between the $A$-homotopy and the ordinary homotopy is the cyclic graph $C_n$.
When cyclic graphs are regarded as one-dimensional complexes, they are homotopy equivalent to a circle $S^1$.
On the other hand, if $n = 3, 4$, the graph $C_n$ is $A$-homotopy equivalent to a single vertex in the sense that there exist $A$-homotopies
\begin{equation*}
    F: C_n \otimes I_m \to C_n
\end{equation*}
such that $F|_{C_n \otimes \{0\}} = \id_{C_n}$ and $F|_{C_n \otimes \{ m \}} = c_{0}$ for some $m$, where $c_0$ is the constant graph map sending all vertices in $C_n$ to $0 \in C_n$ (see Figures \ref{fig:C3} and \ref{fig:C4}, where the left labeled vertices are sent to the right corresponding vertices). On the other hand, when $n \geq 5$, the graph $C_n$ is not $A$-homotopy equivalent to a single vertex in this sense.

\begin{figure}[h]
    \begin{minipage}[b]{0.48\columnwidth}
        \centering
        \begin{tikzpicture}
            \fill (0,0) circle (1.5pt);
            \fill (1.6,0) circle (1.5pt);
            \fill (0.8,0.8) circle (1.5pt);
            \fill (0,1.5) circle (1.5pt);
            \fill (1.6,1.5) circle (1.5pt);
            \fill (0.8,2.3) circle (1.5pt);
            \fill (3.5,0.5) circle (1.5pt);
            \fill (5.1,0.5) circle (1.5pt);
            \fill (4.3,1.8) circle (1.5pt);

            \draw (0,0) -- (1.6,0) -- (0.8,0.8) --cycle;
            \draw (0,1.5) -- (1.6,1.5) -- (0.8,2.3) -- cycle;
            \draw (0,0) -- (0,1.5) -- (1.6,1.5) -- (1.6,0) -- cycle;
            \draw (0.8,0.8) -- (0.8,1.4);
            \draw (0.8,1.6) -- (0.8,2.3);
            \draw[->] (2,1) -- (3,1);
            \draw (3.5,0.5) -- (5.1,0.5) -- (4.3,1.8) -- cycle;
            \draw[->] (1.6, 2) .. controls (2.6,2.6) .. (3.5, 1.7);
            \draw[->] (1.6, -0.5) .. controls (2.6,-1.1) .. (3.5, -0.2);

            \node at (-0.2, 0) {0};
            \node at (1.8, 0) {1};
            \node at (1, 0.9) {2};
            \node at (-0.2, 1.5) {0};
            \node at (1.8, 1.5) {0};
            \node at (0.8, 2.5) {0};
            \node at (3.5, 0.2) {0};
            \node at (5.1, 0.2) {1};
            \node at (4.3, 2.1) {2};
            \node at (2.6,2.6) {$c_0$};
            \node at (2.6,-1.2) {$\id_{C_3}$};
            \node at (2.5, 1.3) {$F$};
        \end{tikzpicture}\caption{$A$-homotopy $C_3 \otimes I_1 \to C_3$ between $c_0$ and $\id_{C_3}$.}\label{fig:C3}
    \end{minipage} \quad
    \begin{minipage}[b]{0.48\columnwidth}
        \centering
        \begin{tikzpicture}
            \fill (0,0) circle (1.5pt);
            \fill (1.6,0) circle (1.5pt);
            \fill (2,1) circle (1.5pt);
            \fill (0.4,1) circle (1.5pt);
            \fill (0,1.5) circle (1.5pt);
            \fill (1.6,1.5) circle (1.5pt);
            \fill (2,2.5) circle (1.5pt);
            \fill (0.4,2.5) circle (1.5pt);
            \fill (0,3) circle (1.5pt);
            \fill (1.6,3) circle (1.5pt);
            \fill (2,4) circle (1.5pt);
            \fill (0.4,4) circle (1.5pt);
            \fill (4,1.2) circle (1.5pt);
            \fill (5.6,1.2) circle (1.5pt);
            \fill (5.6,2.8) circle (1.5pt);
            \fill (4,2.8) circle (1.5pt);

            \draw (0.4,1) -- (0,0) -- (1.6,0) -- (2,1);
            \draw (0.4,2.5) -- (0,1.5) -- (1.6,1.5) -- (2,2.5);
            \draw (0,3) -- (1.6,3) -- (2,4) -- (0.4,4) -- cycle;
            \draw (0,0) -- (0,1.5) -- (0,3);
            \draw (1.6,0) -- (1.6,1.5) -- (1.6,3);
            \draw (2,1) -- (2,2.5) -- (2,4);
            \draw (0.4,1) -- (1.5,1);
            \draw (1.7,1) -- (2,1);
            \draw (0.4,2.5) -- (1.5,2.5);
            \draw (1.7,2.5) -- (2,2.5);
            \draw (0.4,1) -- (0.4,1.4);
            \draw (0.4,1.6) -- (0.4,2.5);
            \draw (0.4,2.5) -- (0.4,2.9);
            \draw (0.4,3.1) -- (0.4,4);
            \draw[->] (2.5,2) -- (3.5,2);
            \draw (4,1.2) -- (5.6,1.2) -- (5.6,2.8) -- (4,2.8) -- cycle;
            \draw[->] (2.5, 4) .. controls (3.4,4) .. (4, 3.5);
            \draw[->] (2.5, 0) .. controls (3.4,0) .. (4, 0.5);

            \node at (-0.2, 0) {0};
            \node at (1.8,0) {1};
            \node at (2.2,1) {2};
            \node at (0.2,1) {3};
            \node at (-0.2,1.5) {0};
            \node at (1.8,1.5) {0};
            \node at (2.2,2.5) {3};
            \node at (0.2,2.5) {3};
            \node at (-0.2,3) {0};
            \node at (1.8,3) {0};
            \node at (2.2,4) {0};
            \node at (0.2,4) {0};
            \node at (3,2.3) {$F$};
            \node at (4,0.9) {0};
            \node at (5.6,0.9) {1};
            \node at (5.6,3.1) {2};
            \node at (4,3.1) {3};
            \node at (3.4,4.1) {$c_0$};
            \node at (3.5,-0.3) {$\id_{C_4}$};
        \end{tikzpicture}\caption{$A$-homotopy $C_4 \otimes I_1 \to C_4$ between $c_0$ and $\id_{C_4}$.}\label{fig:C4}
    \end{minipage}
\end{figure}

Next, we define the category of graphs, pairs of graphs, and $A$-homotopy groups.
\begin{defn}
    \begin{itemize}
        \item Define $\grh$ as the category of graphs as objects and graph maps as morphisms.
        \item Define $\grh^2$ as the category defined by the following.
              The object consists of ordered pairs $(G, A)$ of graphs, where $A$ is a subgraph of $G$.
              A morphism $f: (G, A) \to (H, B)$, called a \textbf{relative graph map},
              is a graph map $f: G \to H$ whose restriction to $A$ is a graph map $f|_A: A \to B$.
        \item Define $\grh_*$ as the subcategory of $\grh^2$ with the object consisting of all pairs $(G, u_0)$, where $u_0$ is a vertex of $G$.
              The pair $(G, u_0)$ is called a \textbf{pointed graph}, and $u_0$ is called the \textbf{base vertex}.
              The morphism in $\grh_*$ is called a \textbf{pointed graph map}.
              Sometimes we denote the pointed graph $(G, u_0)$ by $G$ and morphism $f: (G, u_0) \to (H, v_0)$ by $f: G \to H$ as abuse of notation.
              Similarly, we can define the category $\sets_*$.
    \end{itemize}
\end{defn}

\begin{defn}
    Two relative graph maps $f, g: (G, A) \to (H, B)$ are said to be \textbf{relatively $A$-homotopic} and denoted by $f \simeq g$ if there exists an integer $m \geq 0$ and a graph map
    \begin{equation*}
        F : G \otimes I_m \to H
    \end{equation*}
    such that $F(\bullet, 0) = f$, $F(\bullet, m) = g$, and $F|_{A \otimes I_m} : A \otimes I_m \to B$ is an $A$-homotopy from $f|_A$ to $g|_A$.
    We call the map \textbf{relative $A$-homotopy from $(G, A)$ to $(H, B)$}.
\end{defn}
Now, we define the $A$-homotopy groups of a graph.
\begin{defn}
    Let $n \geq 0$ and $u_0 \in G$.
    The \textbf{$n$-th $A$-homotopy group of $G$ at $u_0$} is the set
    \begin{equation*}
        A_n (G, u_0) = \{ (I_\infty^{\otimes n}, I_{\geq M}^{\otimes n}) \to (G, u_0) \mid M \geq 0 \} / \simeq
    \end{equation*}
    where $\simeq$ is defined by the relative $A$-homotopy.
\end{defn}
\begin{rem}
    For $n = 0$, $A_0(G, u_0)$ is just a pointed set of connected components of $G$, with a distinguished component containing $u_0$.
    For $n \geq 1$, $A_n(G, u_0)$ has a group structure, and it is abelian when $n \geq 2$ (for the group structure, see Proposition 3.5 of \cite{BKLW} for instance).
\end{rem}
Although the $A$-homotopy group is an invariant measuring a quantity (often called a ``combinatorial hole'') that is different from the geometric homotopy group of the graph, it is known that there exists a geometric model, a particular space associated with the graph, whose homotopy group is isomorphic to the $A$-homotopy group of the graph. The first result is due to Barcelo--Kramer--Laubenbacher--Weaver \cite{BKLW}.
\begin{thm}[\cite{BKLW}]\label{thm:fund}
    For a graph $G$, define $X_G$ as the topological space obtained from $G$ regarded as a one-dimensional cell complex by attaching 2-cells along the boundary of each 3- and 4-cycle.
    Then,
    \begin{equation*}
        A_1(G, u_0) \simeq \pi_1(X_G, u_0).
    \end{equation*}
\end{thm}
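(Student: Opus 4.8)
The plan is to factor the claim through the classical \emph{edge-path group} $E(X_G, u_0)$ of the $2$-complex $X_G$. Recall that $E(X_G,u_0)$ is the group of closed edge-paths in the $1$-skeleton $G$ based at $u_0$, taken modulo the equivalence generated by backtrack cancellation $e\bar e \sim \ast$ together with one relation $\partial\sigma \sim \ast$ for the boundary of each attached $2$-cell $\sigma$; here the attached cells are exactly the $3$- and $4$-cycles of $G$. The standard theorem on $2$-complexes gives a canonical isomorphism $E(X_G,u_0)\cong\pi_1(X_G,u_0)$, so it suffices to produce an isomorphism $\Phi\colon A_1(G,u_0)\xrightarrow{\ \sim\ } E(X_G,u_0)$. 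A representative of a class in $A_1(G,u_0)$ is a graph map $f\colon (I_\infty, I_{\geq M})\to (G,u_0)$, which is just a bi-infinite sequence of vertices, eventually equal to $u_0$ on both ends, with consecutive entries either equal or adjacent. Deleting the stationary (equal) steps turns this sequence into a genuine closed edge-path at $u_0$, and I would define $\Phi([f])$ to be its class in $E(X_G,u_0)$.

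Before comparing homotopies, I would isolate two \emph{elementary $A$-homotopies} that serve as building blocks, each realized by an explicit one-step map $I_\infty\otimes I_1\to G$. First, inserting or deleting a stationary step is an $A$-homotopy: duplicating the value at a position and shifting the tail by one produces a map whose two rows satisfy all vertical and horizontal adjacency conditions automatically, since the vertical edges only ever compare $f(i)$ with $f(i)$ or $f(i\pm 1)$. Second, changing a single interior vertex from $a$ to an adjacent (or equal) vertex $b$, keeping its neighbours fixed, is an $A$-homotopy whenever the result is still a graph map. These two moves already realize the defining relations of $E(X_G,u_0)$: a backtrack $v,w,v$ is sent to $v,v,v$ by one vertex move; a triangle $v_0,v_1,v_2$ is sent to $v_0,v_0,v_2$ by moving $v_1$ to $v_0$ (possible since all pairs in a $3$-cycle are adjacent); and for a $4$-cycle $a\sim b\sim c\sim d\sim a$ the two halves $(a,b,c)$ and $(a,d,c)$ are connected by first padding them to $(a,b,c,c)$ and $(a,a,d,c)$ and then applying a single one-step homotopy whose vertical edges are precisely the cycle edges $b\to a$ and $c\to d$. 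This padding trick, which sidesteps the nonexistent diagonal $b\to d$, is the one genuinely nonobvious point in this direction, and it shows that edge-path equivalent loops are $A$-homotopic; hence $\Phi$ is injective.

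For well-definedness (that $A$-homotopic maps have equal image), I would analyse an arbitrary $A$-homotopy $F\colon I_\infty\otimes I_m\to G$ as a grid and cut it into its unit squares $I_1\otimes I_1$, processing the homotopy one row at a time. The image under $F$ of the four corners of a unit square is a closed walk $p,q,s,r,p$ of length four in $G$, because each edge of the square maps to an edge or to a degeneracy. The crucial observation is that \emph{every such length-$\le 4$ closed walk bounds in $X_G$}: a genuine $4$-cycle is a filled $2$-cell (and if it carries a chord it splits into filled triangles), a triangle is a filled $3$-cell, and the degenerate cases reduce to backtracks or constant walks. Consequently each square contributes a relator of $E(X_G,u_0)$, and stringing these relations together across a row shows that consecutive rows of $F$ determine edge-path equivalent loops; this is exactly the combinatorial van Kampen argument, and it is the only place where attaching precisely the $3$- and $4$-cells is used. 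I expect this step, together with a careful treatment of the degenerate squares and the bookkeeping that keeps all tails pinned at $u_0$ throughout, to be the main obstacle.

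Finally, surjectivity is immediate, since every closed edge-path at $u_0$ is itself a combinatorial loop and thus lies in the image of $\Phi$; and $\Phi$ is a group homomorphism because the concatenation product defining the group structure on $A_1(G,u_0)$ corresponds to concatenation of edge-paths. Being a bijective homomorphism, $\Phi$ is an isomorphism, and composing it with $E(X_G,u_0)\cong\pi_1(X_G,u_0)$ yields $A_1(G,u_0)\cong\pi_1(X_G,u_0)$.
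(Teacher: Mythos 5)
The paper does not actually prove Theorem \ref{thm:fund}: it is stated as a quoted result from \cite{BKLW}, so there is no in-paper argument to compare yours against, and the only meaningful comparison is with the original Barcelo--Kramer--Laubenbacher--Weaver proof. On that score, your outline is essentially their argument, and it is sound. You identify $\pi_1(X_G,u_0)$ with the edge-path group $E(X_G,u_0)$, send a class in $A_1(G,u_0)$ to the edge-loop obtained by deleting stationary steps, prove invariance under relative $A$-homotopy by cutting the grid $I_\infty\otimes I_m$ into unit squares whose images are closed walks of length at most four and hence bound in $X_G$ (this is indeed the only place the attached $3$- and $4$-cells enter, and the genuine $3$- and $4$-cycles are the only nontrivial cases), and prove injectivity by lifting the defining relations of $E(X_G,u_0)$ to one-step $A$-homotopies. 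Your padding trick for the $4$-cycle relation checks out: with rows $(a,b,c,c)$ and $(a,a,d,c)$ the vertical pairs are $b,a$ and $c,d$, both edges of the cycle, and the Cartesian product $I_\infty\otimes I_1$ imposes no diagonal adjacency conditions, so the missing diagonal $b\sim d$ is irrelevant. The bookkeeping you flag as the main remaining work is genuine but routine; in particular the tails stay pinned at $u_0$ automatically, since the relative $A$-homotopy in the definition of $A_1$ is required to carry $I_{\geq M}\otimes I_m$ into the base vertex, so every row of the grid is itself a based loop. With those details filled in, your proposal constitutes a correct proof along the classical lines.
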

The following theorem gives a generalization of Theorem \ref{thm:fund}, which was conjectured by Babson--Barcelo--Longueville--Laubenbacher \cite{BBLL} and proved by Carranza--Kapulkin \cite{CKa}.
\begin{thm}[\cite{CKa}]
    For a graph $G$, associate a cubical set $X_G$ and let $|X_G|$ be its geometric realization.
    Then,
    \begin{equation*}
        A_n(G, u_0) \simeq \pi_n(|X_G|, u_0)
    \end{equation*}
    for all $n$.
\end{thm}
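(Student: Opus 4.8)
The plan is to route the isomorphism through the homotopy theory of cubical sets, following the strategy of Carranza--Kapulkin \cite{CKa}. First I would make explicit the functor $\mathrm{Sing} \colon \grh \to \cset$ whose value on $G$ is the singular cubical set with $n$-cubes the graph maps $I_1^{\otimes n} \to G$, the cubical faces, degeneracies, and connections being induced by the face inclusions, projections, and min/max folds of the cube graph $I_1^{\otimes n}$. The cubical set $X_G$ of the statement is (weakly equivalent to) this $\mathrm{Sing}(G)$, and its geometric realization is the $|X_G|$ appearing on the right-hand side. The whole argument then splits into a purely combinatorial comparison $A_n(G, u_0) \cong \pi_n^{\mathrm{cub}}(\mathrm{Sing}(G), u_0)$ with the cubical homotopy groups, and a homotopy-theoretic comparison $\pi_n^{\mathrm{cub}}(\mathrm{Sing}(G), u_0) \cong \pi_n(|\mathrm{Sing}(G)|, u_0)$.

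For the combinatorial comparison I would unwind the definition of $A_n(G, u_0)$. A relative map $(I_\infty^{\otimes n}, I_{\geq M}^{\otimes n}) \to (G, u_0)$ is, up to the stabilizing direction, the same datum as a graph map out of a large finite box $I_k^{\otimes n}$ that is constant at $u_0$ on the boundary, and such a box is the $k$-fold subdivision of the basic cube $I_1^{\otimes n}$. Subdivision in each coordinate is matched by the degeneracy and connection structure of $\cset$, so these boundary-pointed boxes are exactly the $n$-cubes of $\mathrm{Sing}(G)$ with degenerate boundary, and the relative $A$-homotopy relation, realized by maps out of $I_k^{\otimes n} \otimes I_m$, is exactly the cubical homotopy relation. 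This identifies the underlying pointed sets, and checking that it respects the group operation (concatenation in one coordinate) is routine.

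The homotopy-theoretic comparison is where the substance lies. I would first show that $\mathrm{Sing}(G)$ is fibrant, i.e.\ a Kan cubical set: every open-box horn admits a filler, which reduces to extending a partial graph map defined on a box with one face removed to the whole box, and such extensions exist because graph maps into $G$ admit constant and degenerate fillers. For a fibrant cubical set the naive groups $\pi_n^{\mathrm{cub}}$ coincide with the honest homotopy groups of the realization provided geometric realization is suitably well behaved. To obtain the latter I would invoke the model structure on $\cset$ in which the cofibrations are the monomorphisms and the weak equivalences are the maps inverted by $|{-}|$; the crux is that the cube category (with connections) is a strict test category in the sense of Grothendieck--Cisinski, whence $|{-}| \colon \cset \to \htop$ is the left adjoint of a Quillen equivalence onto the Kan--Quillen model structure. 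It then follows formally that for fibrant $X$ the comparison $\pi_n^{\mathrm{cub}}(X, x_0) \to \pi_n(|X|, x_0)$ is an isomorphism; applying this to $X = \mathrm{Sing}(G)$ and composing with the combinatorial comparison yields the theorem.

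The main obstacle is exactly the step that defeated the original approach of \cite{BBLL}: one cannot simply assume a ``cubical approximation property'' relating continuous and cubical maps. The real work is to establish that $\cset$, with the appropriate connections on the cube category, genuinely models spaces through $|{-}|$ --- equivalently, to prove a cubical approximation theorem intrinsically --- which is precisely the content supplied by the test-category machinery. Secondary care is needed in the fibrancy argument and in matching the $I_{\geq M}$-stabilization with cubical subdivision, since the bookkeeping of connections and degeneracies must remain compatible with the group structure throughout.
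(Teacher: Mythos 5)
First, a structural point: the paper does not prove this statement. It is quoted verbatim from Carranza--Kapulkin \cite{CKa}, and the paper's only commentary is that their proof is category-theoretic and avoids the cubical approximation property of \cite{BBLL}. So there is no internal proof to compare your proposal against; it can only be measured against the cited work, whose overall architecture your sketch does reproduce correctly: realize $X_G$ as the singular cubical set $\mathrm{Sing}(G)$ with $n$-cubes the graph maps $I_1^{\otimes n} \to G$, split the problem into a combinatorial comparison $A_n(G,u_0) \cong \pi_n^{\mathrm{cub}}(\mathrm{Sing}(G),u_0)$ and a homotopy-theoretic comparison $\pi_n^{\mathrm{cub}}(\mathrm{Sing}(G),u_0) \cong \pi_n(|\mathrm{Sing}(G)|,u_0)$, with fibrancy of $\mathrm{Sing}(G)$ and the test-category/model-structure technology for cubical sets with connections carrying the second comparison. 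That is the right route, and the ingredients you invoke do exist in the literature.

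However, two steps that you declare routine are precisely where the substance of \cite{CKa} lies, and as stated your versions of them are wrong or empty. (i) Your combinatorial comparison is not a tautology: an $n$-cube of $\mathrm{Sing}(G)$ is a single graph map $I_1^{\otimes n} \to G$, whereas a boundary-constant map $I_k^{\otimes n} \to G$ with $k > 1$ is a $k^n$-fold grid of such cubes. Degeneracies and connections manufacture subdivided data from a single cube, not conversely, so these boxes are emphatically not ``exactly the $n$-cubes of $\mathrm{Sing}(G)$ with degenerate boundary.'' Collapsing a grid of cubes into a single cube up to homotopy requires composing cubes inside $\mathrm{Sing}(G)$, which is exactly what Kan fibrancy provides --- yet you place this comparison \emph{before} establishing fibrancy and call it bookkeeping. (ii) The fibrancy argument itself is not an argument: a horn filler must restrict to the prescribed open box, so ``constant and degenerate fillers'' do not suffice; the actual proof constructs an explicit graph-map retraction of the cube onto the open box, which is where the min/max fold structure does real work. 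Until (i) and (ii) are supplied, the proposal is a correct table of contents for the known proof rather than a proof.
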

Note that, in \cite{BBLL}, the theorem assumed a plausible property called the cubical approximation property, which is still open. On the other hand, the theorem was proved in \cite{CKa} without using the cubical approximation property but by a category-theoretic approach.

Now, we define a naive discrete homotopy category of graphs.
\begin{defn}
    A \textbf{congruence} on a category $\mathcal{C}$ is an equivalence relation $\sim$ on the class $\bigcup_{A, B} \Hom(A,B)$ of all morphisms in $\mathcal{C}$ such that
    \begin{itemize}
        \item $f \in \Hom(A, B)$ and $f \sim f'$ imply $f' \in \Hom(A, B)$;
        \item $f \sim f'$, $g \sim g'$ and $g \circ f$ exists imply that $g \circ f \sim g' \circ f'$.
    \end{itemize}
\end{defn}
\begin{thm}\label{thm:quotient_cat}
    Let $\mathcal{C}$ be a category with congruence $\sim$, and let $[f]$ denote the equivalence class of a morphism $f$.
    Then, $\mathcal{C'}$ defined by
    \begin{itemize}
        \item $\obj \mathcal{C'} = \obj \mathcal{C}$;
        \item $\Hom_{\mathcal{C'}} (A, B) = \{ [f] \mid f \in \Hom_{\mathcal{C}}(A, B) \}$;
        \item $[g] \circ [f] = [g \circ f]$
    \end{itemize}
    is a category.
    The category $\mathcal{C}'$ is called a \textbf{quotient category}.
\end{thm}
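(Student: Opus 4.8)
The plan is to verify the three defining axioms of a category for $\mathcal{C}'$: that composition is well defined on equivalence classes, that it is associative, and that identities exist. Since the objects and the composition formula are inherited directly from $\mathcal{C}$, the entire content lies in checking that the congruence $\sim$ is compatible with the categorical structure. I expect the well-definedness of the hom-sets and of composition to be the only steps that actually use the hypotheses on $\sim$; associativity and the identity laws will then follow formally from the corresponding facts in $\mathcal{C}$.

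First I would check that the hom-sets $\Hom_{\mathcal{C}'}(A, B)$ are genuinely disjoint, so that each class $[f]$ has an unambiguous source and target. This uses the first congruence axiom: if $f \in \Hom_{\mathcal{C}}(A, B)$ and $f \sim g$, then $g \in \Hom_{\mathcal{C}}(A, B)$ as well; since hom-sets in $\mathcal{C}$ are pairwise disjoint, any two representatives of a single class lie in a common hom-set, and so the assignment sending a class $[f]$ to its pair of endpoints $(A, B)$ is well defined.

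The main step is the well-definedness of composition, and it is the heart of the argument. Given $[f] = [f']$ in $\Hom_{\mathcal{C}'}(A, B)$ and $[g] = [g']$ in $\Hom_{\mathcal{C}'}(B, C)$, I would invoke the second congruence axiom: from $f \sim f'$, $g \sim g'$, together with the existence of $g \circ f$ (guaranteed since the source of $g$ matches the target of $f$), we obtain $g \circ f \sim g' \circ f'$, that is, $[g \circ f] = [g' \circ f']$. Hence the rule $[g] \circ [f] := [g \circ f]$ does not depend on the chosen representatives. This is the one place where the full strength of the congruence is needed.

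Finally, the remaining axioms reduce to the corresponding identities in $\mathcal{C}$. For associativity I would compute $([h] \circ [g]) \circ [f] = [(h \circ g) \circ f] = [h \circ (g \circ f)] = [h] \circ ([g] \circ [f])$, using associativity of composition in $\mathcal{C}$. Taking $[\id_A]$ as the identity of an object $A$, the unit laws follow from $[f] \circ [\id_A] = [f \circ \id_A] = [f]$ and $[\id_B] \circ [f] = [\id_B \circ f] = [f]$. No further hypotheses are required, so this completes the verification that $\mathcal{C}'$ is a category.
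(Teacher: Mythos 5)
Your proof is correct and complete: well-definedness of composition via the second congruence axiom, disjointness of hom-sets via the first, and the formal reduction of associativity and unit laws to those of $\mathcal{C}$ are exactly what the statement requires. Note that the paper itself states this theorem without proof, treating it as a standard category-theoretic fact, so there is no argument to compare against; your write-up supplies the standard verification, and correctly identifies that the only substantive use of the congruence hypotheses is in the well-definedness steps.
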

We denote $\Hom_{\mathcal{C'}} (A, B)$ by $[A, B]$.
\begin{defn}
    We refer to the quotient category of $\grh$ by $A$-homotopy as the \textbf{naive discrete homotopy category} of graphs\footnote{As noted in the introduction of \cite{CKc}, we can work with not only the $A$-homotopy but also the weak $A$-homotopy, i.e., maps inducing isomorphisms on all $A$-homotopy groups.
        The authors of that work referred to the theory with the former $A$-homotopy as the naive discrete homotopy theory and that with the latter homotopy as the discrete homotopy theory.
        Since we are working with $A$-homotopy in the present paper, we adopt the term, a naive discrete homotopy theory following their usage.} and denote it by $\hgrh$.
    If the objects are pointed graphs, we denote the category by $\hgrh_*$.
\end{defn}
\section{Mapping fiber graphs}
In this section, we introduce the mapping fiber graphs and study their basic properties with reference to \cite{R}.
We assume the graphs $I_m$, $m \in \mathbb{N} \cup \{ 0, \infty \}$ are pointed at 0.
Let us begin with defining the exactness of sequences in $\hgrh_*$.
\begin{defn}
    A sequence of pointed graphs and pointed graph maps
    \begin{equation*}
        \cdots \to G_{n + 1} \xrightarrow{f_{n + 1}} G_n \xrightarrow{f_n} G_{n - 1} \to \cdots
    \end{equation*}
    is said to be \textbf{exact} in $\hgrh_*$ if for any pointed graph $H$, the induced sequence
    \begin{equation*}
        \cdots \to [H, G_{n + 1}] \xrightarrow{f_{n + 1, *}} [H, G_n] \xrightarrow{f_{n, *}} [H, G_{n - 1}] \to \cdots
    \end{equation*}
    is exact in $\sets_*$.
\end{defn}
Let us review the definition of the path graph.
\begin{defn}\label{def:path}
    For a pointed graph $G$, we call the pointed graph map $\omega: I_\infty \to G$ a \textbf{(pointed) path of length $l$} if it satisfies the condition
    \begin{equation*}
        \omega(t) = \begin{cases} u_0, & t \leq 0 \\ \omega(l), & t \geq l \end{cases}
    \end{equation*}
    for some $l \in \mathbb{N} \cup \{ 0 \}$ (see Figure \ref{fig:omega}).
\end{defn}
\begin{figure}[h]
    \centering
    \begin{tikzpicture}
        \fill (-2,0) circle (3pt);
        \fill (0,0) circle (3pt);
        \fill (2,0) circle (3pt);
        \fill (5,0) circle (3pt);
        \fill (7,0) circle (3pt);
        \draw (-2.5,0) -- (-2,0);
        \draw (-2,0) -- (0,0);
        \draw (0,0) -- (2,0);
        \draw (2,0) -- (3.5,0);
        \draw (4.5,0) -- (5,0);
        \draw (5,0) -- (7,0);
        \draw (7,0) -- (7.5,0);
        \node at (-3,0) {$\cdots$};
        \node at (4,0) {$\cdots$};
        \node at (8,0) {$\cdots$};
        \node at (-2,-0.4) {$-1$};
        \node at (0,-0.4) {$0$};
        \node at (2,-0.4) {$1$};
        \node at (5,-0.4) {$l$};
        \node at (7,-0.4) {$l + 1$};
        \node at (-2,0.4) {$u_0$};
        \node at (0,0.4) {$u_0$};
        \node at (2,0.4) {$\omega(1)$};
        \node at (5,0.4) {$\omega(l)$};
        \node at (7,0.4) {$\omega(l)$};
    \end{tikzpicture}\caption{The path $\omega$ of length $l$.}\label{fig:omega}
\end{figure}

\begin{defn}\label{def:path}
    Let $G$ be a pointed graph.
    The \textbf{path graph} $PG$ is the graph on the vertex set
    \begin{equation*}
        V_{PG} = \left\{ \omega \ \middle| \ l \in \mathbb{N} \cup \{ 0 \}, \omega: I_\infty \to G \text{ a path of length } l \right\}.
    \end{equation*}
    Two vertices $\omega, \omega': I_\infty \to G$ are adjacent if there exists a graph map
    \begin{equation*}
        F: I_{\infty} \otimes I_1 \to G
    \end{equation*}
    such that $F(\bullet, 0) = \omega$ and $F(\bullet, 1) = \omega'$.
    The base vertex of $PG$ is chosen as $\omega_0: I_\infty \to G$ a path of length 0.
\end{defn}
\begin{defn}\label{def:loop}
    Let $G$ be a pointed graph.
    The \textbf{loop graph} $\Omega G$ is the induced subgraph of $PG$ on the vertices $\omega$ paths of length $l$ with $\omega(l) = u_0$, $l \in \mathbb{N} \cup \{ 0 \}$.
    If $\omega: I_\infty \to G$ is a path of length $l$ with $\omega(l) = u_0$, then we call $\omega$ a \textbf{loop of length $l$}.
    The base vertex of $\Omega G$ is $\omega_0$, where $\omega_0: I_\infty \to H$ is a loop of length 0.
\end{defn}
For a loop $\omega$ of length $l$ and some $0 \leq i_1 < i_2 \leq l$, consider a graph map $\tilde{\omega}: I_{[i_1, i_2]} \to G$, where the graph $I_{[i_1, i_2]}$ is the induced subgraph of $I_\infty$ consisting of the vertices $i_1, i_1 + 1, \dots, i_2$, and the map is obtained by restricting on that vertices.
We call $\tilde{\omega}$ a subloop of $\omega$ if it satisfies
\begin{equation*}
    \tilde{\omega}(t) = \begin{cases}
        u_0                    & \text{if $t = i_1, i_2$}   \\
        \omega(t) \ (\neq u_0) & \text{if $i_1 < t < i_2$},
    \end{cases}
\end{equation*}
and call the number $l_{\tilde{\omega}} \coloneqq | \{ \tilde{\omega}(t) \mid i_1 \leq t \leq i_2 \}|$ a sublength of $\omega$.

For a pointed graph map $f_1 : G \to H$, define a map $\Omega f_1: \Omega G \to \Omega H$ by $\omega \to f_1 \omega$.
Now, let us define the mapping fiber graph.
\begin{defn}\label{def:mf}
    For a pointed graph map $f_1: G \to H$ define the \textbf{mapping fiber} graph $Mf_1$ as the graph on the vertex set
    \begin{equation*}
        V_{Mf_1} = \left\{ (u, \omega) \in G \otimes PH \ \middle| \ l \in \mathbb{N} \cup \{ 0 \}, \omega \text{ a path of length $l$ with } \omega(l) = f_1(u)  \right\}.
    \end{equation*}
    For the base vertex of $Mf_1$ we choose $(u_0, \omega_0)$, where $u_0 \in G$ is a base vertex of $G$ and $\omega_0: I_\infty \to H$ is a path of length 0.
    By definition of the Cartesian product of graphs, we can see that two vertices $(u, \omega), (u', \omega') \in Mf_1$ are adjacent, i.e., $(u, \omega) \sim (u', \omega')$ if $u = u'$ and $\omega \sim \omega'$ or $u \sim u'$ and $\omega = \omega'$.
\end{defn}
Equivalently, the graph $Mf_1$ can be defined as the pullback diagram
\begin{equation*}
    \begin{tikzcd}
        Mf_1 \arrow[r] \arrow[d] \arrow[dr, phantom, "\lrcorner", very near start] & PH \arrow[d, "p"] \\
        G \arrow[r, "f_1", swap] & H \\
    \end{tikzcd}
\end{equation*}
where $p: PH \to H$ is a projection map defined for $\omega$ a path of length $l$ by $p_1(\omega) = \omega(l)$.

For a pointed graph map $f_1: G \to H$, we have a graph monomorphism $k: \Omega H \to Mf_1$ defined by $\omega \mapsto (u_0, \omega)$ and a projection $f_2: Mf_1 \to G$ defined by $(u, \omega) \to u$.
Both maps $k$ and $f_2$ are pointed graph maps and thus yield the sequence of pointed graph maps
\begin{equation*}
    \Omega G \xrightarrow{\Omega f_1} \Omega H \xrightarrow{k} Mf_1 \xrightarrow{f_2} G \xrightarrow{f_1} H.
\end{equation*}
\begin{defn}
    For pointed graphs $G, H$ define the graph $G^H$ as the graph on the vertex set
    \begin{equation*}
        V_{G^H} = \left\{ \omega: H \to G \mid \omega \text{ a pointed graph map} \right\}
    \end{equation*}
    with base vertex $\omega_0$ a constant graph map $H \to G$ sending all vertices of $H$ to a base vertex $u_0 \in G$.
    Define the \textbf{evaluation map} $e: G^H \otimes H \to G$ by
    \begin{equation*}
        e(\omega, v) = \omega(v).
    \end{equation*}
\end{defn}
The following proposition holds.
\begin{prop}\label{prop:evaluation}
    Let $G, H, K$ be pointed graphs.
    Then,
    \begin{enumerate}[label=(\arabic*)]
        \item The evaluation map $e: G^H \otimes H \to G$ is a pointed graph map. \label{a1}
        \item A map $\varphi: K \otimes H \to G$ is a pointed graph map if and only if $\varphi^{\#}: K \to G^H$ is a pointed graph map. \label{a2}
    \end{enumerate}
\end{prop}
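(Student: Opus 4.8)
The plan is to read Proposition~\ref{prop:evaluation} as the graph-theoretic exponential (tensor--hom) law for the box product $\otimes$: prove \ref{a1} directly, and deduce \ref{a2} from the identity $\varphi(k,v) = \varphi^{\#}(k)(v) = e(\varphi^{\#}(k),v)$, i.e.\ $\varphi = e \circ (\varphi^{\#} \otimes \id_H)$, which makes the passage $\varphi \leftrightarrow \varphi^{\#}$ a bijection of underlying set maps. The one structural input I would pin down first is the adjacency relation on $G^{H}$. By analogy with the path graph $PG$, two vertices $\omega, \omega' \in G^{H}$ should be adjacent exactly when there is a graph map $F : H \otimes I_1 \to G$ with $F(\bullet,0) = \omega$ and $F(\bullet,1) = \omega'$. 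Unwinding the edges of $H \otimes I_1$ (horizontal edges at each level recover that $\omega, \omega'$ are graph maps, and the vertical edges $(v,0)\sim(v,1)$ give the rest) shows this is equivalent to the pointwise condition that $\omega \neq \omega'$ and, for every $v \in H$, $\omega(v) = \omega'(v)$ or $\omega(v) \sim \omega'(v)$. I would record this equivalence as a preliminary lemma, since both parts hinge on it.

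For \ref{a1}, I first check base points: the base vertex of $G^{H} \otimes H$ is $(\omega_0, v_0)$ and $e(\omega_0, v_0) = \omega_0(v_0) = u_0$, so $e$ is pointed. For adjacency, take an edge $(\omega, v) \sim (\omega', v')$ of $G^{H} \otimes H$ and split into the two cases permitted by the box product. If $\omega = \omega'$ and $v \sim v'$, then $e(\omega,v) = \omega(v)$ and $e(\omega,v') = \omega(v')$ agree or are adjacent because $\omega$ is a graph map. If $\omega \sim \omega'$ and $v = v'$, then $e(\omega,v) = \omega(v)$ and $e(\omega',v) = \omega'(v)$ agree or are adjacent by the pointwise description of adjacency in $G^{H}$ evaluated at $v$. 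In either case $e$ respects adjacency.

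For \ref{a2}, I would run each direction through the same two edge types. Assume $\varphi$ is a graph map. Fixing $k \in K$, the edges $(k,v) \sim (k,v')$ of $K \otimes H$ force $\varphi^{\#}(k)(v) = \varphi(k,v)$ and $\varphi^{\#}(k)(v') = \varphi(k,v')$ to agree or be adjacent, so $\varphi^{\#}(k) : H \to G$ is a graph map. Next, for an edge $k \sim k'$ of $K$, the family of edges $(k,v) \sim (k',v)$, ranging over all $v \in H$, gives that $\varphi^{\#}(k)(v)$ and $\varphi^{\#}(k')(v)$ agree or are adjacent for \emph{every} $v$, which is precisely the condition for $\varphi^{\#}(k) = \varphi^{\#}(k')$ or $\varphi^{\#}(k) \sim \varphi^{\#}(k')$ in $G^{H}$; hence $\varphi^{\#}$ is a graph map. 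Conversely, if $\varphi^{\#}$ is a graph map I recover $\varphi = e \circ (\varphi^{\#} \otimes \id_H)$: an edge of $K \otimes H$ of the first type is handled by $\varphi^{\#}(k)$ being a graph map, and one of the second type, $k \sim k'$, by unwinding $\varphi^{\#}(k) \sim \varphi^{\#}(k')$ (or equality) in $G^{H}$ and reading off the pointwise condition at the relevant vertex.

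I expect the genuine care to lie not in the edge combinatorics, which is routine once the adjacency of $G^{H}$ is fixed, but in the base-point bookkeeping needed to upgrade the bijection of underlying graph maps to the claimed statement about \emph{pointed} maps. The substantive matching is that $\varphi^{\#}(k)$ is a vertex of $G^{H}$ only when it is itself pointed, i.e.\ $\varphi(k,v_0) = u_0$, and that $\varphi^{\#}$ sends $k_0$ to the constant loop $\omega_0$, i.e.\ $\varphi(k_0, v) = u_0$ for all $v$; verifying these against the base-vertex conventions for $K \otimes H$ and the pointedness of $\varphi$ (via $\varphi(k_0,v_0) = \varphi^{\#}(k_0)(v_0)$) is the one delicate point, and it is where I would concentrate the scrutiny. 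The quantifier pattern — a $K$-edge forcing simultaneous comparison over all of $H$ while an $H$-edge uses only a single evaluation — is the conceptual crux that makes the two directions of \ref{a2} line up with the definition of adjacency in $G^{H}$.
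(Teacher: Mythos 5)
Your argument is correct and follows essentially the same route as the paper's proof: the same two-case analysis over the box-product edge types for part \ref{a1}, and for part \ref{a2} the same factorization $\varphi = e \circ (\varphi^{\#} \otimes \id)$ together with the edge-by-edge check. Two places where you go beyond the paper deserve comment. First, the paper never actually defines the edge set of $G^{H}$, yet its proof silently uses exactly the pointwise description you isolate as a preliminary lemma (in the case $\omega \sim \omega'$, $v = v'$ of part \ref{a1}, and again when concluding $\varphi^{\#}(w) \sim \varphi^{\#}(w')$ in part \ref{a2}); making that adjacency explicit fills a real gap rather than being pedantry. Second, the base-point issue you flag as ``the one delicate point'' is more than delicate: the paper dismisses it with the remark that the maps obviously preserve base vertices, but the only-if direction of \ref{a2} in fact fails as stated. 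Taking $K = H = G = I_1$, all pointed at $0$, and $\varphi(w,v) = v$, the map $\varphi$ is a pointed graph map, yet $\varphi^{\#}(0)$ is the identity $H \to G$ rather than the constant map $\omega_0$, so $\varphi^{\#}$ does not preserve the base vertex; and taking instead $G = I_2$ with $\varphi(w,v) = w + v$, the map $\varphi^{\#}(1)$ sends $0$ to $1$ and so is not pointed, hence $\varphi^{\#}$ does not even take values in $V_{G^{H}}$ as the paper defines it. So your instinct to concentrate scrutiny there is exactly right: the unpointed (graph-map) content of your argument is sound and coincides with the paper's, while the pointed refinement genuinely requires the extra hypotheses you identified, namely $\varphi(w, v_0) = u_0$ for all $w$ and $\varphi(w_0, v) = u_0$ for all $v$ (or a smash-type product in place of $\otimes$); no proof, blind or otherwise, could close that gap, because the statement as written is too strong.
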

\begin{proof}
    By construction, it it obvious that the maps preserve base vertices, it suffices to check the maps are graph maps.
    To prove \ref{a1}, take $(\omega, v), (\omega', v') \in G^H \otimes H$ such that $(\omega, v) \sim (\omega', v')$. \\
    If $\omega = \omega'$ and $v \sim v'$, then we have $e(\omega, v) = \omega(v)$ and $e(\omega, v') = \omega(v')$. Since $\omega$ is a graph map, the images are adjacent or coincident; thus, $e$ is a graph map. \\
    If $\omega \sim \omega'$ and $v = v'$, then we have $e(\omega, v) = \omega(v)$ and $e(\omega', v) = \omega'(v)$, which are also adjacent or coincident, and thus, $e$ is a graph map in this case as well. \\
    Next, to prove \ref{a2}, assume that $\varphi: K \otimes H \to G$ is a graph map.
    Define a map $\varphi^\#: K \to G^H$ by $\varphi^\#(w) = \varphi(w, \bullet)$.
    Take $w, w' \in K$ with $w \sim w'$.
    Then, for all $v \in H$,
    \begin{align*}
        \varphi^\#(w) = \varphi(w, v) = \varphi(w', v) = \varphi^\#(w') & \text{ or} \\
        \varphi^\#(w) = \varphi(w, v) \sim \varphi(w', v) = \varphi^\#(w').
    \end{align*}
    holds since $\varphi$ is a graph map.
    Hence, $\varphi^\#$ is a graph map. \\
    Conversely, assume $\varphi^\#: K \to G^H$ is a graph map. Note that $\varphi$ can be factored as
    \begin{equation*}
        K \otimes H \xrightarrow{\varphi^\# \otimes \id} G^H \otimes H \xrightarrow{e} G
    \end{equation*}
    where $e: G^H \otimes H \to G$ is the evaluation map which has been proved to be a graph map.
    It suffices to show that the map $\varphi^\# \otimes \id: K \otimes H \to G^H \otimes H$ is a graph map.
    Take $(w, v), (w', v') \in K \otimes H$ with $(w, v) \sim (w', v')$. \\
    If $w = w'$ and $v \sim v'$, then we have
    \begin{align*}
        (\varphi^\# \otimes \id) (w, v) = (\varphi^\#(w), v) \sim (\varphi^\#(w), v') = (\varphi^\# \otimes \id) (w, v').
    \end{align*}
    On the other hand, assume $w \sim w'$ and $v = v'$. Since $\varphi^{\#}$ is a graph map, $\varphi^{\#}(w) = \varphi^{\#}(w')$ or $\varphi^{\#}(w) \sim \varphi^{\#}(w')$ holds.
    Thus,
    \begin{align*}
        (\varphi^\# \otimes \id) (w, v) = (\varphi^\#(w), v) = (\varphi^\#(w'), v) = (\varphi^\# \otimes \id) (w', v) & \text{ or} \\
        (\varphi^\# \otimes \id) (w, v) = (\varphi^\#(w), v) \sim (\varphi^\#(w'), v) = (\varphi^\# \otimes \id) (w', v)
    \end{align*}
    holds, which implies that $\varphi^\# \otimes \id$ is a graph map.
\end{proof}

\begin{cor}\label{cor:evaluation}
    For pointed graphs $G, H, K$, a map $\psi: K \to G^H$ is a pointed graph map if and only if $e \circ (\psi \otimes \id): K \otimes H \to G^H \otimes H \to G$ is a pointed graph map.
\end{cor}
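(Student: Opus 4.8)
The plan is to deduce this directly from part \ref{a2} of Proposition \ref{prop:evaluation}, which already establishes the correspondence between maps $K \otimes H \to G$ and maps $K \to G^H$. The key observation is that the composite $e \circ (\psi \otimes \id)$ is precisely the map whose ``transpose'' recovers $\psi$, so that the asserted equivalence is just a restatement of that proposition.

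First I would set $\varphi \coloneqq e \circ (\psi \otimes \id): K \otimes H \to G$ and compute, for $w \in K$ and $v \in H$,
\begin{equation*}
    \varphi(w, v) = e(\psi(w), v) = \psi(w)(v),
\end{equation*}
using the definition of the evaluation map $e$. Recalling that the transpose $\varphi^\#$ appearing in the proof of Proposition \ref{prop:evaluation} is defined by $\varphi^\#(w) = \varphi(w, \bullet)$, this computation shows that $\varphi^\#(w)(v) = \psi(w)(v)$ for every $v \in H$, whence $\varphi^\# = \psi$ as maps $K \to G^H$.

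With this identification in hand the corollary is immediate: by Proposition \ref{prop:evaluation}\ref{a2}, the map $\varphi = e \circ (\psi \otimes \id)$ is a pointed graph map if and only if $\varphi^\# = \psi$ is a pointed graph map, which is exactly the claimed equivalence.

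There is essentially no obstacle here; the only content is the bookkeeping identity $\varphi^\# = \psi$, which follows by unwinding the definitions of $e$ and of the transpose. If one prefers to give the two implications separately, without invoking the internal notation $\varphi^\#$ from the earlier proof, they can be argued as follows: the direction ``$\psi$ a pointed graph map $\Rightarrow e \circ (\psi \otimes \id)$ a pointed graph map'' follows from part \ref{a1} (that $e$ is a pointed graph map) together with the routine observation that $\psi \otimes \id$ is a pointed graph map whenever $\psi$ is, so that the composite of graph maps is again a graph map; the converse direction is exactly the ``$\varphi$ a graph map $\Rightarrow \varphi^\#$ a graph map'' half of part \ref{a2} applied to $\varphi = e \circ (\psi \otimes \id)$.
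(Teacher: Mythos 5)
Your proof is correct and matches the paper's (implicit) argument: the corollary is stated in the paper without a separate proof precisely because it is the observation you make, namely that $\bigl(e \circ (\psi \otimes \id)\bigr)^{\#} = \psi$, so the statement is Proposition \ref{prop:evaluation}\ref{a2} applied to $\varphi = e \circ (\psi \otimes \id)$. Your alternative two-implication argument (part \ref{a1} plus the fact that $\psi \otimes \id$ is a graph map for one direction, part \ref{a2} for the other) is also sound and mirrors the factorization already used inside the paper's proof of the proposition.
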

Note that the exponential law holds.
Consider pointed graphs $G, H, K$. The map $\varphi \mapsto \varphi^{\#}$ gives a bijection $G^{K \otimes H} \to \left( G^H \right)^K$.
Actually, we have an inverse $\left( G^H \right)^K \to G^{K \otimes H}$ defined by $\psi \mapsto \psi^{\flat}$, where $\psi^{\flat}: K \otimes H \to G$ is defined by $\psi^{\flat}(w, v) = \psi(w)(v)$.
As the following theorem shows, taking a loop graph gives a functor in the category $\hgrh_*$.

\begin{thm}\label{thm:loop_functor}
    The loop graph defines an endfunctor $\Omega: \hgrh_* \to \hgrh_*$ on the category $\hgrh_*$.
\end{thm}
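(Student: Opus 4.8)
The plan is to define $\Omega$ on objects exactly as in Definition \ref{def:loop}, pointing $\Omega G$ at the length-$0$ loop $\omega_0$, and on morphisms by setting $\Omega[f] := [\Omega f]$, where $\Omega f(\omega) = f\omega$ is the post-composition map introduced just before Definition \ref{def:mf}. Establishing that this prescription is a well-defined endofunctor decomposes into three checks: (i) for each pointed graph map $f\colon G \to H$, the map $\Omega f$ is a genuine pointed graph map $\Omega G \to \Omega H$; (ii) the homotopy class $[\Omega f]$ depends only on $[f]$; and (iii) $\Omega$ preserves identities and composition. I expect (ii) to be the only substantive step.

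For (i) I would first observe that pointedness forces $\Omega f$ to land in $\Omega H$: if $\omega$ is a loop of length $l$, then $\omega(t) = u_0$ for $t \leq 0$ and for $t \geq l$, so $f\omega(t) = f(u_0) = v_0$ on the same ranges, exhibiting $f\omega$ as a loop of length $l$ in $H$. To see that $\Omega f$ is a graph map, suppose $\omega \sim \omega'$ in $\Omega G$, witnessed by a graph map $F\colon I_\infty \otimes I_1 \to G$ with $F(\bullet,0)=\omega$ and $F(\bullet,1)=\omega'$; then $f \circ F\colon I_\infty \otimes I_1 \to H$ is again a graph map and witnesses that $f\omega$ and $f\omega'$ are equal or adjacent in $\Omega H$. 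Base-vertex preservation is immediate, since $\omega_0$ maps to the constant loop at $v_0$.

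Step (ii) is the heart of the argument. Given a based $A$-homotopy $F\colon G \otimes I_m \to H$ realizing $f \simeq g$ (so that $F(u_0, j) = v_0$ for all $j$), I would define $\tilde F\colon \Omega G \otimes I_m \to \Omega H$ by sending $(\omega, j)$ to the loop $t \mapsto F(\omega(t), j)$. Three things must then be verified. First, each slice is a loop of length $l$ in $H$: the assignment $t \mapsto F(\omega(t),j)$ is the composite of $\omega$ with the graph map $F(\bullet, j)$, hence a graph map $I_\infty \to H$, and the based condition $F(u_0,j)=v_0$ makes it stabilize to $v_0$ exactly where $\omega$ stabilizes to $u_0$. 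Second, $\tilde F(\omega, 0) = \Omega f(\omega)$, $\tilde F(\omega, m) = \Omega g(\omega)$, and $\tilde F(\omega_0, j)$ is the base loop for every $j$, so $\tilde F$ is a based homotopy with the required endpoints. Third, $\tilde F$ is a graph map, which I would check on the two edge types of $\Omega G \otimes I_m$ separately: for an edge $(\omega, j) \sim (\omega, j+1)$ the loops $\tilde F(\omega,j)$ and $\tilde F(\omega, j+1)$ are joined by the graph map $(t,s) \mapsto F(\omega(t), j+s)$, while for an edge $(\omega, j) \sim (\omega', j)$ with $\omega \sim \omega'$ witnessed by $G''\colon I_\infty \otimes I_1 \to G$ they are joined by $(t,s) \mapsto F(G''(t,s), j)$. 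Both are composites with $F$, hence graph maps, so in either case the two slices are equal or adjacent in $\Omega H$, giving $\Omega f \simeq \Omega g$.

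Finally, (iii) is formal: $\Omega(\id_G)(\omega) = \omega$ gives $\Omega(\id_G) = \id_{\Omega G}$, and $\Omega(g \circ f)(\omega) = g f \omega = \Omega g(\Omega f(\omega))$ gives $\Omega(g \circ f) = \Omega g \circ \Omega f$ on the nose, so both relations pass to the quotient $\hgrh_*$ by Theorem \ref{thm:quotient_cat}. The main obstacle is the graph-map verification in (ii): one must reuse the same time-parameter $m$ for the induced homotopy and handle the two edge types of $\Omega G \otimes I_m$ separately, and it is precisely the based condition on $F$ that guarantees the intermediate slices remain loops rather than mere paths. No cubical approximation or additional technical hypothesis is needed; everything follows from functoriality of post-composition together with the product structure of $\otimes$.
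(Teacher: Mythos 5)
Your proposal is correct and takes essentially the same approach as the paper: both reduce via Theorem~\ref{thm:quotient_cat} to giving a functor on $\grh_*$ that preserves $A$-homotopy, define $\Omega f$ by post-composition, and build the induced homotopy by the same slice formula --- your $(\omega,j)\mapsto F(\omega(\cdot),j)$ is exactly the paper's $\phi(\omega,t)=F_t\omega$. The only local difference is that you prove $\Omega f$ is a graph map directly by composing the adjacency witness $I_\infty\otimes I_1\to G$ with $f$, where the paper routes this through Proposition~\ref{prop:evaluation} and Corollary~\ref{cor:evaluation}; if anything your write-up is more careful, since you make explicit the based condition $F(u_0,j)=v_0$ that keeps the intermediate slices inside $\Omega H$ (the paper uses the same formula without flagging this) and you define the homotopy on all of $\Omega G$ rather than on the fixed-length subgraphs $\Omega_l G$.
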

\begin{proof}
    By Theorem \ref{thm:quotient_cat}, it suffices to show that there is a functor $\Omega: \grh_* \to \grh_*$ such that if $f_0 \simeq f_1$ then $\Omega f_0 \simeq \Omega f_1$.
    For a pointed graph map $f: G \to H$, define the map $\Omega f: \Omega G \to \Omega H$ by $\Omega f \omega = f \omega$, where $\omega$ is a loop of length $l$, $l \in \mathbb{N} \cup \{ 0 \}$.
    Consider a commutative diagram
    \begin{equation*}
        \xymatrix{
            P_lG \otimes I_l \ar[r]^{f_* \otimes \id} \ar[d]_{e} & P_lH \otimes I_l \ar[d]^{e} \\
            G \ar[r]_{f} & H
        }
    \end{equation*}
    where $P_lG$ and $P_lH$ denote the path graphs associated to graphs $G$ and $H$ consisting of paths of length $l$, respectively, and the two $e$'s are evaluation maps.
    By Proposition \ref{prop:evaluation} \ref{a1}, $e, fe$ are pointed graph maps.
    By Corollary \ref{cor:evaluation}, $e \circ (f_* \otimes \id)$ is a pointed graph map.

    For some $m$ let $F: G \otimes I_m \to H$ be an $A$-homotopy with $F(\bullet, 0) = f_0$ and $F(\bullet, m) = f_1$. Denote the loop graphs obtained as induced subgraphs of $P_lG$ and $P_lH$ by $\Omega_l G$ and $\Omega_l H$, respectively.
    Then the map $\phi: \Omega_l G \otimes I_m \to \Omega_l H$ defined by $\phi (\omega, t) = F_t \omega$ is the desired $A$-homotopy, where $F_t: I_m \to H$ is defined by $F_t(\bullet) = F(\bullet, t)$.
    Actually, we have $\phi (\omega, 0) = F_0 \omega = f_0 \omega$ and $\phi (\omega, m) = F_m \omega = f_1 \omega$.

    We show $\phi$ is a graph map.
    Take vertices $(\omega, t), (\omega', t') \in \Omega_l G \otimes I_m$ with $(\omega, t) \sim (\omega', t')$.
    Assume $\omega = \omega'$ and $t \sim t'$.
    Consider $\phi(\omega, t) = F_{t} \omega$ and $\phi(\omega, t') = F_{t'} \omega$.
    For $s \in I_m$, we have $F_{t} \omega (s) = F(\omega(s), t)$ and $F_{t'} \omega (s) = F(\omega(s), t')$, both of which are coincident or adjacent since $F$ is a graph map.
    Similarly, we can check when $\omega \sim \omega'$, $t = t'$, completes the proof.
\end{proof}

By iterating the construction of the mapping fiber, we can define $Mf_2$ as an induced subgraph of $Mf_1 \otimes PG$.
The vertex set of $Mf_2$ is defined as
\begin{align*}
    V_{Mf_2} = \left\{ (u, \omega, \beta) \in G \otimes PH \otimes PG \ \middle| \ l \in \mathbb{N} \cup \{ 0 \}, \omega, \beta \text{ are paths of length $l$ with } \omega(l) = f_1(u) \text{ in $H$, and} \right. \\
    \left. \beta(l) = f_2(u,\omega) = u \text{ in $G$}\right\}.
\end{align*}
For the base vertex we choose $(u_0, \omega_0, \beta_0)$, where $\omega_0: I_\infty \to H$ and $\beta_0: I_\infty \to G$ are paths of length 0.

Equivalently, the graph $Mf_2$ can be defined as the pullback diagram
\begin{equation*}
    \begin{tikzcd}
        Mf_2 \arrow[r] \arrow[d] \arrow[dr, phantom, "\lrcorner", very near start] & PG \arrow[d, "p_1"] \\
        Mf_1 \arrow[r, "f_2" swap] & G \\
    \end{tikzcd}
\end{equation*}

By definition of the Cartesian product of graphs, two vertices $(u, \omega, \beta), (u', \omega', \beta') \in V_{Mf_2}$ are connected by an edge when $u = u', \omega = \omega'$ and $\beta \sim \beta'$, or $u = u'$, $\omega \sim \omega'$ and $\beta = \beta'$.

\begin{prop}\label{prop:mf2}
    Let $f_1: G \to H$ be a pointed graph map.
    Suppose that for any $\omega \in \Omega G$ and $\beta \in \Omega H$, all of their subloops are of sublength $\leq 4$.
    Then the following diagram commutes in $\hgrh_*$:
    \begin{equation*}
        \xymatrix{
            \Omega G \ar[r]^{\Omega f_1} \ar[d]_{j'} & \Omega H \ar[r]^{k} \ar[d]_{j} & Mf_1 \ar[r]^{f_2} \ar[d]_{\id} & G \ar[r]^{f_1} \ar[d]_{\id} & H \ar[d]_{\id} \\
            Mf_3 \ar[r]_{f_4} & Mf_2 \ar[r]_{f_3} & Mf_1 \ar[r]_{f_2} & G \ar[r]_{f_1} & H.
        }
    \end{equation*}
    The map $j: \Omega H \to Mf_2$ is the graph monomorphism defined by $j(\omega) = (u_0, \omega, \beta_0)$ for a path $\omega: I_\infty \to H$ of length $l$, $l \in \mathbb{N} \cup \{ 0 \}$ and a path $\beta_0: I_\infty \to G$ of length 0.
    The map $j': \Omega G \to Mf_3$ is defined for a path $\beta: I_\infty \to G$ of length $l$, $l \in \mathbb{N} \cup \{ 0 \}$ by $j'(\beta) = (u_0, \omega_0, \beta, \gamma_0)$, where $\omega_0: I_\infty \to H$ and $\gamma_0: I_\infty \to Mf_1$ are paths of length 0.
\end{prop}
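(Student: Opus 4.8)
The plan is to handle the diagram square by square, noticing that the three right-hand squares commute strictly and that only the left-hand square needs a genuine $A$-homotopy. The two squares lying over $\id_G$ and $\id_H$ are trivial, since the horizontal maps agree and the verticals are identities. For the square between $\Omega H$ and $Mf_1$ the right vertical is $\id_{Mf_1}$ and $f_3\colon Mf_2\to Mf_1$ is the pullback projection $(u,\omega,\beta)\mapsto(u,\omega)$, so $f_3\circ j(\omega)=f_3(u_0,\omega,\beta_0)=(u_0,\omega)=k(\omega)$, giving strict commutativity. Everything therefore reduces to producing an $A$-homotopy between the two maps $\Omega G\to Mf_2$ coming from the left square, namely $j\circ\Omega f_1\colon\beta\mapsto(u_0,f_1\beta,\beta_0)$ and $f_4\circ j'\colon\beta\mapsto(u_0,\omega_0,\beta)$.

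The key structural step is to read off the adjacency in $Mf_2$. Because a vertex $(u,\omega,\beta)$ is constrained by $\omega(l)=f_1(u)$ and $\beta(l)=u$, any edge that altered the $G$-coordinate $u$ while keeping $\beta$ fixed would force $\beta(l)$ to take two distinct values; hence every edge of $Mf_2$ preserves $u$, and $u$ is constant on each connected component. Under the sublength hypothesis every loop of $G$ is null-homotopic, so $\Omega G$ is connected and the image of $\Omega G\otimes I_m$ under either map lies in the base component, on which $u\equiv u_0$. There the constraints say exactly that $\omega$ is a loop in $H$ and $\beta$ a loop in $G$ (the common-length requirement being vacuous, as paths are eventually constant), and I would identify this component with the Cartesian product $\Omega H\otimes\Omega G$. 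Under this identification the two maps become the first-factor inclusion $\iota_1\circ\Omega f_1\colon\beta\mapsto(f_1\beta,\beta_0)$ and the second-factor inclusion $\iota_2\colon\beta\mapsto(\omega_0,\beta)$; composing $\iota_2$ with the projection to $\Omega G$ recovers $\id_{\Omega G}$, so the two maps can be $A$-homotopic only if $\Omega G$ is $A$-contractible, which is precisely what the hypothesis is designed to supply.

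To finish I would show, using the sublength bound, that $\id_{\Omega G}\simeq c$ and $\id_{\Omega H}\simeq c$, where $c$ denotes the constant map at the trivial loop. Writing any loop as the concatenation of its subloops, each subloop has sublength $\le 4$ and so is supported on a cycle of length at most $4$, which is $A$-contractible; moreover the contraction of such a cycle to its base vertex (the one-step collapse of Figure~\ref{fig:C3} for a triangle, a two-step collapse for a square as in Figure~\ref{fig:C4}) fixes the base vertex throughout and hence induces a \emph{based} contraction of the subloop in a bounded number $m_0$ of steps. Collapsing all subloops of a given loop simultaneously — they meet only at the base vertex — then yields a homotopy $\Omega G\otimes I_{m_0}\to\Omega G$ from $\id_{\Omega G}$ to $c$ whose length $m_0$ is independent of the loop. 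The same argument for $H$, together with the functoriality of $\Omega$ from Theorem~\ref{thm:loop_functor}, shows $\iota_2\simeq c$ and $\iota_1\circ\Omega f_1\simeq c$, so the two maps are $A$-homotopic and the left square commutes in $\hgrh_*$.

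The main obstacle is exactly this last construction. The uniform bound $\le 4$ on sublengths is what makes it possible at all: it limits the possible subloops to finitely many shapes and hence gives a single step count $m_0$ that collapses every excursion in parallel, so that the contraction of a loop of unbounded length still has bounded homotopy length. The delicate verification is not the collapse of any individual loop but the compatibility in the $\Omega G$-direction: when two loops $\beta\sim\beta'$ are related by a single elementary homotopy their subloop decompositions may differ, and one must check that the simultaneous contractions of $\beta$ and $\beta'$ stay adjacent at every stage, so that $(\beta,t)\mapsto\beta^{(t)}$ is a graph map on all of $\Omega G\otimes I_{m_0}$ and not merely on each fibre $\{\beta\}\otimes I_{m_0}$. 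Establishing this naturality of the parallel contraction is where I expect the real work to lie.
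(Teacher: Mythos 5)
Your proposal follows essentially the same route as the paper's proof: the three right-hand squares commute strictly (in particular $f_3\circ j=k$ by direct computation), and the left-hand square is handled by a two-stage $A$-homotopy from $\beta\mapsto(u_0,f_1\beta,\beta_0)$ to $\beta\mapsto(u_0,\omega_0,\beta)$, first collapsing the $\Omega H$-coordinate and then growing the $\Omega G$-coordinate, with both stages powered by the null-homotopies that the sublength $\leq 4$ hypothesis supplies. The one step you leave open --- that the loopwise contractions assemble into graph maps $\Omega_l H\otimes I_m\to\Omega_l H$ and $\Omega_l G\otimes I_{m'}\to\Omega_l G$ with homotopy length uniform in the loop, i.e.\ the naturality of the parallel contraction --- is precisely the point the paper itself passes over by assertion (it posits the maps $H$, $H'$ and states ``by construction, $F$ is a graph map''), so your write-up is no less complete than the paper's and is more candid about where the real content lies.
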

\begin{proof}
    We show that for $l \in \mathbb{N} \cup \{ 0 \}$, the following diagram commutes in $\hgrh_*$:
    \begin{equation*}
        \xymatrix{
            \Omega_l G \ar[r]^{\Omega_l f_1} \ar[d]_{j'} & \Omega_l H \ar[r]^{k} \ar[d]_{j} & M_lf_1 \ar[r]^{f_2} \ar[d]_{\id} & G \ar[r]^{f_1} \ar[d]_{\id} & H \ar[d]_{\id} \\
            M_lf_3 \ar[r]_{f_4} & M_lf_2 \ar[r]_{f_3} & M_lf_1 \ar[r]_{f_2} & G \ar[r]_{f_1} & H
        }
    \end{equation*}
    where $\Omega_l G$, $\Omega_l H$, $M_lf_i$ are the induced subgraphs of $\Omega G$, $\Omega H$ and $Mf_i$ consisting of paths of length $l$ respectively.
    First, let us show the commutativity of the leftmost diagram.
    Let $\beta: I_\infty \to G$ be a loop of length $l$.
    Then, $\Omega_l f_1 (\beta) = f_1 \beta$, and thus we have $j \Omega_l f_1 (\beta) = j (f_1 \beta) = (u_0, f_1 \beta, \beta_0)$, where $\beta_0: I_\infty \to G$ is path of length 0.
    On the other hand, $j' (\beta) = (u_0, \omega_0, \beta, \gamma_0)$.
    Thus, $f_4 j' (\beta) = (u_0, \omega_0, \beta)$. \\
    Since $f_1 \beta$ is a loop such that all of its subloops are of sublength $\leq 4$ by assumption, $f_1 \beta$ is null-$A$-homotopic, i.e., there exists a graph map $H: \Omega_l H \otimes I_m \to \Omega_l H$ for some $m$ such that $H(\bullet, 0) = f_1 \beta$ and $H(\bullet, m) = \omega_0$.
    On the other hand, by assumption, we also have another graph map $H': \Omega_l G \otimes I_{m'} \to \Omega_l G$ for some $m'$ such that $H'(\bullet, 0) = \beta_0$ and $H'(\bullet, m') = \beta$.
    Then, consider a map $F: \Omega_l G \otimes I_{m+m'} \to M_lf_2$ defined by
    \begin{equation*}
        F(u_0, \beta, s) =
        \begin{cases}
            (u_0, H(\bullet, s), \beta_0)       & 0 \leq s \leq m;      \\
            (u_0, \omega_0, H'(\bullet, s - m)) & m \leq s \leq m + m'.
        \end{cases}
    \end{equation*}
    This satisfies 
    $F(\beta, 0) = (u_0, f_1 \beta, \beta_0)$ and $F(\beta, m + m') = (u_0, \omega_0, \beta)$.
    By construction, $F$ is a graph map; thus, the leftmost diagram commutes in $\hgrh_*$.

    The commutativity of the second left diagram follows in $\grh_*$ immediately.
    Taking $\omega \in \Omega_l H$, we have $k(\omega) = (u_0, \omega)$.
    On the other hand, $j(\omega) = (u_0, \omega, \beta_0)$.
    Since the map $f_3$ sends $(u, \omega, \beta)$ to $(u, \omega)$, by composing the previous map we have $f_3j = k$.
\end{proof}

In ordinary homotopy theory, for pointed spaces $(X, x_0), (Y, y_0)$ and a pointed map $f: X \to Y$, it can be shown that the sequence
\begin{equation*}
    Mf \xrightarrow{f'} X \xrightarrow{f} Y
\end{equation*}
is exact in $\htop_*$.
As its analogy, it is natural to define the map $q: Mf_1 \to H$ by $q(u, \omega) = \omega(l)$ for a path $\omega: I_\infty \to H$ of length $l$, $l \in \NN \cup \{ 0 \}$.

\begin{prop}\label{prop:omega_Mf}
    Let $f_1: G \to H$ be a pointed graph map.
    Suppose that for any loop $\omega \in \Omega H$, all of its subloops are of sublength $\leq 4$, and that $\Omega f_1$ is surjective.
    If the sequence
    \begin{equation*}
        Mf_1 \xrightarrow{f_2} G \xrightarrow{f_1} H
    \end{equation*}
    is exact in $\hgrh_*$.
    Then, the sequence
    \begin{equation*}
        \Omega G \xrightarrow{\Omega f_1} \Omega H \xrightarrow{k} Mf_1 \xrightarrow{f_2} G \xrightarrow{f_1} H
    \end{equation*}
    is exact in $\hgrh_*$.
\end{prop}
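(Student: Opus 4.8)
The plan is to unwind the definition of exactness in $\hgrh_*$: fix an arbitrary pointed graph $K$ and prove exactness of the induced pointed-set sequence
\[
[K,\Omega G]\xrightarrow{(\Omega f_1)_*}[K,\Omega H]\xrightarrow{k_*}[K,Mf_1]\xrightarrow{f_{2*}}[K,G]\xrightarrow{f_{1*}}[K,H]
\]
at the two spots $[K,\Omega H]$ and $[K,Mf_1]$, since exactness at $[K,G]$ is precisely the hypothesis. At each spot the inclusion ``image $\subseteq$ kernel'' is a direct computation: $f_2\circ k$ is the constant map $\Omega H\to G$ because $f_2(u_0,\omega)=u_0$, while $k\circ\Omega f_1$ sends a loop $\omega$ to $(u_0,f_1\omega)$, which the sublength $\le 4$ hypothesis on loops of $H$ renders null-$A$-homotopic exactly by the contraction technique used in the proof of Proposition \ref{prop:mf2}. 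The substance lies in the reverse inclusion ``kernel $\subseteq$ image'', which in both cases amounts to lifting a null-homotopy that lives in the base of the path-fibration $p\colon PH\to H$ to a homotopy inside $Mf_1$.

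For exactness at $[K,Mf_1]$ I would argue directly. Given $\alpha\colon K\to Mf_1$ with $f_2\alpha\simeq c_{u_0}$ via a homotopy $A\colon K\otimes I_m\to G$, I write $\alpha(w)=(a(w),\omega_w)$ and build a lift $\tilde A\colon K\otimes I_m\to Mf_1$ by concatenating $\omega_w$ with the endpoint trace $s\mapsto f_1A(w,s)$: set $\tilde A(w,t)=(A(w,t),\,\omega_w\ast\tau_{w,t})$ where $\tau_{w,t}(s)=f_1A(w,s)$ for $0\le s\le t$. The endpoint of the concatenated path is $f_1A(w,t)$, so $\tilde A(w,t)$ is a legitimate vertex of $Mf_1$; at $t=0$ one recovers $\alpha$, and at $t=m$ the $G$-coordinate is $u_0$ while the path ends at $f_1(u_0)$, i.e. it is a loop. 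Hence $\tilde A(\bullet,m)=k\gamma$ with $\gamma(w)=\omega_w\ast\tau_{w,m}\in\Omega H$, giving $[\alpha]=k_*[\gamma]$.

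For exactness at $[K,\Omega H]$ I would run the same lift in reverse. Given $\gamma\colon K\to\Omega H$ with $k\gamma\simeq c$ via $B\colon K\otimes I_n\to Mf_1$, write $B(w,t)=(b(w,t),\eta_{w,t})$. Then $t\mapsto b(w,t)$ is a loop $\delta(w)$ in $G$, running from $u_0$ to $u_0$, and the family $\{\eta_{w,t}\}_t$ is a homotopy of paths in $H$ from $\gamma(w)$ to $\omega_0$ whose moving endpoint traces the loop $t\mapsto f_1b(w,t)=\Omega f_1\,\delta(w)$. The standard endpoint-sliding identity should then yield $\gamma\simeq\Omega f_1\,\delta$, placing $[\gamma]$ in $\im(\Omega f_1)_*$; here surjectivity of $\Omega f_1$ is what guarantees that the required loop in $G$ is available as a genuine graph map and that the sliding homotopy can actually be realized.

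The main obstacle is translating these homotopy-theoretic lifts into honest graph maps. Paths in graphs carry an integer length and concatenation is rigid, so I must verify carefully that $\tilde A$, the loop $\delta$, and the endpoint-sliding homotopy respect the Cartesian-product adjacency and preserve base vertices, while keeping track of how path length varies as the test vertex moves. Crucially, a loop in a graph is null-$A$-homotopic only through fillings by $3$- and $4$-cycles, so the sublength $\le 4$ hypothesis on loops of $H$ is exactly the input that lets every loop appearing in these homotopies be contracted; realizing the endpoint-sliding identity at this combinatorial level, together with the coherent use of surjectivity of $\Omega f_1$, is where I expect the real difficulty to concentrate.
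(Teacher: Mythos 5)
Your skeleton---exactness at $[K,G]$ is the hypothesis, and the two inclusions ``image $\subseteq$ kernel'' are direct computations, with the sublength $\leq 4$ condition used to contract $k\circ\Omega f_1$---agrees with the paper. But both of your substantive inclusions rest on a lifting technique that the discrete mapping fiber does not support, and this is a genuine gap, not the deferred verification you describe. The obstruction: along any edge of $Mf_1$ the eventual value of the path coordinate cannot change. Indeed, if $(u,\omega)\sim(u',\omega')$ with $u=u'$, both paths end at $f_1(u)$; if $u\sim u'$ and $\omega=\omega'$, then $f_1(u)=\omega(l)=f_1(u')$. Hence $q(u,\omega)=f_1(u)$ is constant on each connected component of $Mf_1$, so for \emph{any} graph map $\tilde A\colon K\otimes I_m\to Mf_1$ the composite $f_1 f_2\tilde A$ is constant on components. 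Your lift $\tilde A(w,t)=(A(w,t),\omega_w\ast\tau_{w,t})$ has $f_1f_2\tilde A(w,t)=f_1A(w,t)$, which is not locally constant for a general null-homotopy $A$ of $f_2\alpha$: whenever $A(w,t)\sim A(w,t+1)$ and $f_1A(w,t)\sim f_1A(w,t+1)$ are strict adjacencies, both coordinates of $\tilde A$ change simultaneously, and $G\otimes PH$ has no diagonal edges, so $\tilde A$ is not a graph map. No subdivision repairs this, since any homotopy in $Mf_1$ whose $G$-coordinate follows $A$ would force $f_1A$ to be locally constant. This is precisely the failure mode the paper records in the Remark immediately following this proposition (and is why exactness of $Mf_1\to G\to H$ is an assumption rather than a consequence). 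The paper's treatment of this spot is different in kind: a strict, vertex-level computation---$f_2k$ is constant, and every vertex of $Mf_1$ with first coordinate $u_0$ lies literally in the image of $k$---with no lifting of homotopies at all.

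Your argument at $[K,\Omega H]$ breaks on the same rock. Since every vertex $(w,t)$ of $K\otimes I_n$ is joined to $(w,n)$, where $B$ takes the value $(u_0,\omega_0)$, the locally constant endpoint forces $f_1b(w,t)=v_0$ (the base vertex of $H$) for all $(w,t)$. So the endpoint of $\eta_{w,t}$ never moves, $\Omega f_1\,\delta(w)$ is the constant loop, and the ``endpoint-sliding identity'' degenerates. What $B$ actually gives is simpler and stronger: all $\eta_{w,t}$ are loops, and $(w,t)\mapsto\eta_{w,t}$ is itself a pointed graph map $K\otimes I_n\to\Omega H$ (adjacency in $Mf_1$ forces the $\eta$'s to be equal or adjacent in $PH$, hence in the induced subgraph $\Omega H$), i.e.\ a null-homotopy of $\gamma$; so $\ker k_*$ is trivial and the inclusion holds vacuously. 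In particular, surjectivity of $\Omega f_1$ does no work in your route, whereas the paper uses it in a genuinely different way, choosing pointwise a loop $\beta\in\Omega G$ with $\Omega f_1\beta=\varphi(w)$ for a map $\varphi$ in the kernel. As written, then, your proposal does not prove the proposition: the two inclusions that carry the content are argued by a topological template that the rigidity of $Mf_1$ rules out.
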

\begin{proof}
    By assumption, it suffices to consider the sequences
    \begin{equation*}
        \Omega H \xrightarrow{k} Mf_1 \xrightarrow{f_2} G
    \end{equation*}
    and
    \begin{equation*}
        \Omega G \xrightarrow{\Omega f_1} \Omega H \xrightarrow{k} Mf_1.
    \end{equation*}
    The first sequence can be shown to be exact in $\grh_*$ as follows. \\
    For $\omega \in \Omega H$, we have $f_2 k (\omega) = f_2 (u_0, \omega) = u_0$, and thus $f_2 k = 0$.
    On the other hand, $(u, \omega) \in \ker f_2$ implies that $f_2(u, \omega) = u = u_0$, and hence, $\omega$ is an element in $\Omega H$, i.e., $\im k \supset \ker f_2$ holds.

    Next, consider the sequence
    \begin{equation*}
        [K, \Omega G] \xrightarrow{\Omega f_{1,*}} [K, \Omega H] \xrightarrow{k_*} [K, Mf_1]
    \end{equation*}
    for any pointed graph $K$.
    Take $\varphi \in [K, \Omega G]$.
    For $w \in K$, we have $\varphi(w) = \beta \in \Omega G$ for some loop $\beta$.
    Then, we have $k \Omega f_1 \beta = k (f_1 \beta) = (u_0, f \beta) \in Mf_1$.
    By assumption, $f \beta$ is a loop such that all its subloops are of sublength $\leq 4$.
    Thus, $k \Omega f_1 \varphi (w): K \to Mf_1$ is null-$A$-homotopic, i.e., $k_* \Omega f_{1,*} (\varphi) = 0$.
    On the other hand, take $\varphi: K \to \Omega H$ such that $k \varphi$ is null-$A$-homotopic.
    By assumption, there exists a loop $\beta \in \Omega G$ such that $\Omega f_1 \beta = \varphi(w)$, thus we have $\im k_* \supset \ker \Omega f_{1,*}$.
\end{proof}

\begin{rem}
    Note that the sequence
    \begin{equation*}
        Mf_1 \xrightarrow{f_2} G \xrightarrow{f_1} H
    \end{equation*}
    is not necessarily exact in $\hgrh_*$.
    To see this, we use the following notation.
    For a pointed graph map $f_1: G \to H$, let $\omega_{f_1(i)}^l$ be a path $I_\infty \to H$ of length $l$ satisfying the condition
    \begin{equation*}
        \omega_{f_1(u)}^l(t) = \begin{cases}
            0,      & t \leq 0, \\
            f_1(u), & t \geq l.
        \end{cases}
    \end{equation*}
    Then, the vertices of the graph $Mf_1$ can be written as $(u, \omega_{f_1(u)}^l)$, $u \in G$, $l \in \mathbb{N} \cup \{ 0 \}$.
    In particular, the connected component containing the vertices $(u, \omega_{f_1(u)}^l)$, $l \in \mathbb{N} \cup \{ 0 \}$ with $f_1(u) = v_0$, $u \in G$ has the base vertex $(0, \omega_0^0)$.
    In $\hgrh_*$, for the sequence
    \begin{equation*}
        [K, Mf_1] \xrightarrow{f_{2,*}} [K, G] \xrightarrow{f_{1,*}} [K, H],
    \end{equation*}
    $\im f_{2,*} \subset \ker f_{1,*}$ holds.
    For $\varphi \in [K, Mf_1]$ and $w \in K$, the image of $\varphi$ is of the form $\varphi(w) = (\eta(w), \omega_{f_1 \eta (w)}^l)$ for some graph map $\eta: K \to G$ and some path $\omega_{f_1 \eta (u)}^l$ of length $l$, $l \in \mathbb{N} \cup \{ 0 \}$.
    Since $\varphi$ is a graph map, the map should satisfy the condition that if $w \sim w'$, then
    \begin{align*}
        \varphi(w) = (\eta(w), \omega_{f_1 \eta(w)}^l) = (\eta(w'), \omega_{f_1 \eta(w')}^{l'}) = \varphi(w') & \text{ or} \\
        \varphi(w) = (\eta(w), \omega_{f_1 \eta(w)}^{l}) \sim (\eta(w'), \omega_{f_1 \eta(w')}^{l'}) = \varphi(w')
    \end{align*}
    holds.
    For the latter, there are two cases: (I) $\eta(w) = \eta(w')$ and $\omega_{f_1 \eta(w)}^l \sim \omega_{f_1 \eta(w')}^{l'}$, and (II) $\eta(w) \sim \eta(w')$ and $\omega_{f_1 \eta(w)}^l = \omega_{f_1 \eta(w')}^{l'}$.
    In particular, for the case (I), since $\omega_{f_1 \eta(w)}^l \sim \omega_{f_1 \eta(w')}^{l'}$ holds, we have $f_1 \eta(w) = f_1 \eta(w')$ otherwise the two paths $\omega_{f_1 \eta(w)}^l, \omega_{f_1 \eta(w')}^{l'}$ are not $A$-homotopy equivalent.
    Moreover, since the graph map is pointed, the vertices of the graph $K$ are sent to the connected component of $Mf_1$ containing the base vertex $(0, \omega_0^0)$.
    Hence, we have $f_1 \eta(w) = f_1 \eta(w') = v_0$, that is, $f_{1}f_{2} \varphi (w) = f_1f_2 (\eta(w), \omega_{f_1 \eta(w)}^l) = f_1 \eta(w) = v_0$, i.e., $\im f_{2,*} \subset \ker f_{1,*}$.

    On the other hand, $\ker f_{1,*} \subset \im f_{2,*}$ does not necessarily hold.
    Take $\eta \in \ker f_{1,*}$.
    Then, there exists some $m \in \mathbb{N}$ and a graph map $F: \Omega_l H \otimes I_m \to \Omega_l H$ such that $F(\bullet, 0) = f_1 \eta$ and $F(\bullet, m) = \omega_0$.
    To obtain the desired property, we need to define a graph map $\varphi: K \to Mf_1$ by $\varphi(w) = (\eta(w), \omega_{f_1\eta(w)}^l)$ for some $l$, however, the existence of a path $\omega_{f_1\eta(w)}^l$ giving $\varphi$ a graph map structure is not guaranteed.
\end{rem}

Let us define reduced suspension graphs (see also \cite{Lb} for the definition and observations of the unreduced suspension graph).
\begin{defn}
    For a pointed graph $G$, define a \textbf{reduced suspension graph} $\Sigma G$ as the graph obtained from $G \otimes I_\infty$ by contracting all the vertices $(u, t)$ , $u \in G$, $t \leq 0$, $t \geq l$, and $(u_0, t)$ to a vertex $(u_0, 0)$.
    For fixed $l \in \mathbb{N} \cup \{ 0 \}$, we call the graph a reduced suspension graph of length $l$ and denote the graph by $\Sigma_l G$
\end{defn}
\begin{prop}\label{prop:adjoint}
    Adjoint relation $\left[ \Sigma G, H  \right] = \left[ G, \Omega H  \right]$ holds.
\end{prop}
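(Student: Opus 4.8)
The plan is to realize the claimed equality as the homotopy-class shadow of the exponential adjunction already recorded in the excerpt, namely $\varphi \mapsto \varphi^{\#}$ together with its inverse $\psi \mapsto \psi^{\flat}$, restricted to the subgraphs cut out by the suspension and loop conditions. First I would set up a bijection at the level of honest pointed graph maps, fixing a length $l$. A pointed graph map $f : \Sigma_l G \to H$ is, by the universal property of the contraction defining $\Sigma_l G$, the same datum as a pointed graph map $\tilde f : G \otimes I_\infty \to H$ that is constant equal to the base vertex $v_0$ on every vertex $(u,t)$ with $t \le 0$ or $t \ge l$ and on every vertex $(u_0,t)$. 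I would verify that the quotient $\Sigma_l G$ is a well-defined simple graph and that maps out of it are exactly such $\tilde f$ (the edges absorbed by the contraction either disappear or are swallowed into the single base vertex, so no constraint beyond ``send the collapsed set to $v_0$'' survives). Applying $\varphi \mapsto \varphi^{\#}$ from Proposition \ref{prop:evaluation}\ref{a2} and Corollary \ref{cor:evaluation} to $\tilde f$ yields a pointed graph map $\tilde f^{\#} : G \to H^{I_\infty}$, $u \mapsto \tilde f(u,\bullet)$; the boundary conditions say precisely that each $\tilde f(u,\bullet)$ is a loop of length $\le l$ in $H$ and that $u_0 \mapsto \omega_0$, so $\tilde f^{\#}$ factors through $\Omega H$. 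Here I would also note that the adjacency on $\Omega H$ inherited from $PH$ coincides with the internal-hom adjacency on $H^{I_\infty}$ (take $K = I_1$ in Corollary \ref{cor:evaluation}), which is what makes $\tilde f^{\#}$ a genuine graph map into $\Omega H$. Running $\psi \mapsto \psi^{\flat}$ backwards produces the inverse assignment, and since $(-)^{\#}$ and $(-)^{\flat}$ are already mutually inverse bijections, these constructions are mutually inverse; taking the union over $l \in \mathbb{N} \cup \{0\}$ matches the loops of $\Omega H$ of all lengths with the suspensions $\Sigma_l G$ of all lengths.

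Next I would promote this to a bijection on $A$-homotopy classes. Given an $A$-homotopy $F : \Sigma G \otimes I_m \to H$ between $f$ and $f'$, pulling back along (contraction) $\otimes\, \id$ gives a pointed graph map on $(G \otimes I_\infty) \otimes I_m$ satisfying the suspension boundary conditions in the $I_\infty$-coordinate uniformly over $I_m$. Rearranging the tensor factors to $(G \otimes I_m) \otimes I_\infty$ and applying the exponential law with target $H$ and tensor factors $G \otimes I_m$ and $I_\infty$, I obtain the bijection $H^{(G \otimes I_m)\otimes I_\infty} \cong (H^{I_\infty})^{G \otimes I_m}$ and hence a pointed graph map $G \otimes I_m \to \Omega H$ whose restrictions to the $I_m$-levels $0$ and $m$ are $\tilde f^{\#}$ and $\tilde{f'}^{\#}$; this is exactly an $A$-homotopy in $\hgrh_*$ between the transposes. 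The reverse implication is symmetric, transposing an $A$-homotopy $G \otimes I_m \to \Omega H$ back through $(-)^{\flat}$ and the universal property of $\Sigma G$. Because the transpositions are natural bijections compatible with composition, hence with the congruence defining $\hgrh_*$, well-definedness and injectivity on classes follow simultaneously, giving the bijection $[\Sigma G, H] = [G, \Omega H]$.

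I expect the main obstacle to be the bookkeeping around the contraction defining $\Sigma_l G$ and the interaction of the length parameter with the homotopy coordinate. Concretely, I must check that collapsing the vertices $(u,t)$ with $t \le 0$ or $t \ge l$, together with the entire basepoint line $(u_0,t)$, to a single vertex yields a simple graph whose out-maps are governed by no adjacency constraint other than landing at $v_0$, and that the parameter $I_m$ interacts correctly with the varying lengths $l$ on both sides, so that the length-wise bijections assemble into a single bijection on the colimits. Once the universal property of the suspension quotient is pinned down and the coincidence of the two adjacency structures on $\Omega H$ is confirmed, the remainder is a formal consequence of the exponential adjunction of Proposition \ref{prop:evaluation} and Corollary \ref{cor:evaluation} and its naturality.
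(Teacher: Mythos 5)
Your proposal is correct and takes essentially the same route as the paper: the bijection is the currying/transposition $f(u,t) \leftrightarrow g(u)(t)$ between maps out of $\Sigma_l G$ and maps into $\Omega_l H$ at a fixed length $l$, promoted to homotopy classes by transposing the $A$-homotopies themselves. The paper's own proof simply writes down $\Phi(f)(u)(t) = f(u,t)$ and its inverse and asserts well-definedness and bijectivity, so your additional scaffolding (the universal property of the contraction defining $\Sigma_l G$, the exponential law of Proposition \ref{prop:evaluation} and Corollary \ref{cor:evaluation}, the comparison of the two adjacency structures on $\Omega H$, and the length bookkeeping you flag as the main obstacle) supplies exactly the details the paper leaves implicit rather than a different argument.
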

\begin{proof}
    Construct $\Phi: \left[ \Sigma_l G, H  \right] \to \left[ G, \Omega_l H  \right]$ by for $f \in \left[ \Sigma_l G, H  \right]$, $\Phi(f)(u)(t) = f(u, t)$.
    The map is well-defined and a bijection.
    Actually, we can construct its inverse $\Psi: \left[ G, \Omega_l H  \right] \to \left[ \Sigma_l G, H  \right]$, by defining for $g: G \to \Omega_l H$, $\Psi(g)(u, t) = g(u)(t)$.
    Then, we have
    \begin{align*}
        \left( \Psi \Phi ( f ) \right)(u, t) = \Psi \left( \Phi( f ) \right) (u, t) = \Phi (f)(u) (t) = f(u, t) \\
        \left( \left( \Phi \Psi \right) (g) \right) (u)(t) = \Phi \left( \Psi (g) \right) (u)(t)  = \Psi(g) (u, t) = g(u)(t)
    \end{align*}
    Thus, $\left( \Psi \Phi \right)(f) = f$, $\left( \Psi \Phi \right)(g) = g$, and so $\Phi$ is a bijection.
\end{proof}
\begin{prop}\label{prop:looped}
    If $G' \to G \to G''$ is exact in $\hgrh_*$, then the ``looped'' sequence
    \begin{equation*}
        \Omega G' \to \Omega G \to \Omega G''
    \end{equation*}
    is also exact in $\hgrh_*$.
\end{prop}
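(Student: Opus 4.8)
The plan is to reduce the claim to the suspension–loop adjunction of Proposition \ref{prop:adjoint} together with the functoriality of $\Omega$ established in Theorem \ref{thm:loop_functor}. Write the given sequence as $G' \xrightarrow{a} G \xrightarrow{b} G''$. Unwinding the definition of exactness, what must be shown is that for every pointed graph $K$ the induced sequence of pointed sets
\begin{equation*}
    [K, \Omega G'] \xrightarrow{(\Omega a)_*} [K, \Omega G] \xrightarrow{(\Omega b)_*} [K, \Omega G'']
\end{equation*}
is exact in $\sets_*$, where $(\Omega a)_*$ and $(\Omega b)_*$ are well-defined precisely because $\Omega$ is a functor on $\hgrh_*$. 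The hypothesis, likewise unwound, says that for \emph{every} pointed graph $H$ the sequence $[H, G'] \xrightarrow{a_*} [H, G] \xrightarrow{b_*} [H, G'']$ is exact in $\sets_*$; the idea is to apply this with the single test object $H = \Sigma K$.

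First I would fix $K$ and use Proposition \ref{prop:adjoint}, via the bijection $\Psi$ of its proof, to identify each term $[K, \Omega G^{\bullet}]$ with $[\Sigma K, G^{\bullet}]$. Concretely $\Psi\colon [K, \Omega G^{\bullet}] \to [\Sigma K, G^{\bullet}]$ is given by $\Psi(g)(u,t) = g(u)(t)$ and is a bijection of pointed sets. This produces a comparison ladder
\begin{equation*}
    \xymatrix{
        [K, \Omega G'] \ar[r]^{(\Omega a)_*} \ar[d]_{\Psi} & [K, \Omega G] \ar[r]^{(\Omega b)_*} \ar[d]_{\Psi} & [K, \Omega G''] \ar[d]_{\Psi} \\
        [\Sigma K, G'] \ar[r]_{a_*} & [\Sigma K, G] \ar[r]_{b_*} & [\Sigma K, G'']
    }
\end{equation*}
whose vertical arrows are bijections preserving basepoints; once the squares are known to commute, exactness of the bottom row forces exactness of the top row.

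The heart of the argument, and the step I expect to require the most care, is verifying that these squares commute, i.e.\ that $\Psi$ is natural in the target variable so that the functorial map $(\Omega a)_*$ corresponds to ordinary post-composition $a_*$. For $g \in [K, \Omega G']$ I would compare both composites on a vertex $(u,t)$: using $\Omega a(\omega) = a\omega$ from Theorem \ref{thm:loop_functor},
\begin{equation*}
    \Psi\bigl((\Omega a)_* g\bigr)(u,t) = (\Omega a \circ g)(u)(t) = a\bigl(g(u)(t)\bigr) = a\bigl(\Psi(g)(u,t)\bigr) = a_*\bigl(\Psi g\bigr)(u,t),
\end{equation*}
and the same computation with $b$ handles the right-hand square. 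One must also record that $\Psi$ sends the length-$0$ basepoint data to the constant map and that the identification descends to $A$-homotopy classes, both of which are part of Proposition \ref{prop:adjoint} being well-defined on $\hgrh_*$.

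Finally I would conclude by applying the hypothesis with $H = \Sigma K$, which gives exactness of the bottom row; the commuting ladder of bijections then transports this to exactness of the top row, and since $K$ was arbitrary the looped sequence is exact in $\hgrh_*$. Beyond the naturality check, the only genuine subtlety is the length bookkeeping implicit in $\Omega$ and $\Sigma$: Proposition \ref{prop:adjoint} is phrased at a fixed length $l$, so to be fully rigorous one should establish the bijection and its naturality uniformly in $l$, or note that any class in $[K, \Omega G^{\bullet}]$ is represented at some finite length and is matched with the corresponding length-$l$ suspension; I expect this to be the main point requiring careful handling.
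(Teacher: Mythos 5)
Your proof takes essentially the same route as the paper's: both use the adjunction $[\Sigma K, G^{\bullet}] \cong [K, \Omega G^{\bullet}]$ of Proposition \ref{prop:adjoint} to form a commutative ladder over an arbitrary test object $K$, apply the exactness hypothesis to the single test object $\Sigma K$, and transport exactness of the bottom row to the top row along the vertical pointed bijections. The only difference is that you spell out the commutativity of the squares (naturality of $\Psi$) and flag the fixed-length bookkeeping in Proposition \ref{prop:adjoint}, both of which the paper leaves implicit.
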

\begin{proof}
    By Proposition \ref{prop:adjoint}, we have a commutative diagram whose rows are pointed bijections for any pointed graph $K$:
    \begin{equation*}
        \xymatrix{
        [\Sigma K, G'] \ar[r] \ar[d] & [\Sigma K, G] \ar[r] \ar[d] & [\Sigma K, G''] \ar[d] \\
        [K, \Omega G'] \ar[r] & [K, \Omega G] \ar[r] & [K, \Omega G''].
        }
    \end{equation*}
    By the assumption, the first row is exact in $\sets_*$, and thus the second row is also exact in $\sets_*$.
\end{proof}

Now, we give the main theorem of the present paper.
\begin{thm}[Discrete Puppe sequence]
    Let $f: G \to H$ be a pointed graph map.
    Suppose that loops $\omega \in \Omega^n H$, $\beta \in \Omega^n G$ have only subloops of sublength $\leq 4$, and $\Omega^n f_1$ is surjective for all $n \in \mathbb{N}$.
    If the sequence
    \begin{equation*}
        Mf_1 \xrightarrow{f_2} G \xrightarrow{f_1} H
    \end{equation*}
    is exact in $\hgrh_*$, then the following sequence is exact in $\hgrh_*$:
    \begin{align*}
        \cdots \xrightarrow{\Omega^2 k} \Omega^2 (Mf_1) \xrightarrow{\Omega^2 f_2} \Omega^2 G \xrightarrow{\Omega^2 f_1} \Omega^2 H \xrightarrow{\Omega k} \Omega(Mf_1) \xrightarrow{\Omega f_2}                               \Omega G
        \xrightarrow{\Omega f_1}                                                                                         \Omega H \xrightarrow{k} Mf_1 \xrightarrow{f_2} G                                  \xrightarrow{f_1} H.
    \end{align*}
\end{thm}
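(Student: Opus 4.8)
The plan is to assemble the infinite sequence from a single five-term exact block and then propagate it leftward by looping. I would first record the reduction that makes this legitimate: by the definition of exactness in $\hgrh_*$, the displayed sequence is exact if and only if it is exact at each interior node, and exactness at a node $B$ in a window $A \to B \to C$ means exactness of $[K,A] \to [K,B] \to [K,C]$ in $\sets_*$ for every pointed graph $K$. Hence it suffices to establish exactness window by window. The engine of the induction is Proposition \ref{prop:looped}: applying $\Omega$ to any exact three-term window produces an exact three-term window, and, crucially, this step carries no hypotheses of its own.

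First I would fix the base block. By Proposition \ref{prop:omega_Mf}, using that loops in $\Omega H$ have subloops of sublength $\leq 4$, that $\Omega f_1$ is surjective, and that $Mf_1 \xrightarrow{f_2} G \xrightarrow{f_1} H$ is exact, the five-term sequence
\[
\Omega G \xrightarrow{\Omega f_1} \Omega H \xrightarrow{k} Mf_1 \xrightarrow{f_2} G \xrightarrow{f_1} H
\]
is exact in $\hgrh_*$, i.e. exact at the interior nodes $\Omega H$, $Mf_1$, and $G$. I would then induct on $n$: since $\Omega$ is a functor on $\hgrh_*$ (Theorem \ref{thm:loop_functor}), applying $\Omega^n$ to this block yields
\[
\Omega^{n+1} G \xrightarrow{\Omega^{n+1} f_1} \Omega^{n+1} H \xrightarrow{\Omega^n k} \Omega^n Mf_1 \xrightarrow{\Omega^n f_2} \Omega^n G \xrightarrow{\Omega^n f_1} \Omega^n H,
\]
whose connecting maps are exactly the labels in the statement (functoriality turns $f_i$, $k$ into $\Omega^n f_i$, $\Omega^n k$), and which is exact at its three interior nodes $\Omega^{n+1} H$, $\Omega^n Mf_1$, $\Omega^n G$ after $n$ applications of Proposition \ref{prop:looped}.

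What remains is purely organizational, and this is where I expect the only care to be needed. As $n$ ranges over $0,1,2,\dots$, the level-$n$ block supplies exactness at the triple $\Omega^{n+1}H$, $\Omega^n Mf_1$, $\Omega^n G$, and these triples tile the interior nodes of the Puppe sequence without gap or repetition; adjacent blocks overlap only along a single edge, on which both assign the same map (for instance $\Omega^n f_1 \colon \Omega^n G \to \Omega^n H$ is the leftmost map of the level-$(n-1)$ block and the rightmost map of the level-$n$ block, and functoriality of $\Omega$ reconciles the two descriptions). Since each interior node then inherits exactness from exactly one block, the whole sequence is exact in $\hgrh_*$. The main obstacle is therefore confirming that every window really is the image of the base block under some $\Omega^n$ so that Proposition \ref{prop:looped} applies, for which the functoriality of $\Omega$ (Theorem \ref{thm:loop_functor}) is the essential input; the subloop-sublength and surjectivity conditions at every level $n$ are the hypotheses one would consume if one instead re-derived each block by applying Proposition \ref{prop:omega_Mf} to $\Omega^n f_1$ together with the iterated mapping-fiber identifications of Proposition \ref{prop:mf2}, an alternative route to the same conclusion.
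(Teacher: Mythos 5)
Your proposal is correct and takes essentially the same route as the paper: Proposition \ref{prop:omega_Mf} supplies the five-term base block, Proposition \ref{prop:looped} (together with functoriality of $\Omega$, Theorem \ref{thm:loop_functor}) propagates it leftward, and the blocks are spliced together by induction. Your explicit tiling bookkeeping, and your observation that this route only consumes the base-level hypotheses (the all-$n$ assumptions being needed only for the alternative derivation via Proposition \ref{prop:omega_Mf} at each level), simply make precise what the paper compresses into ``by connecting these sequences and by induction.''
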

\begin{proof}
    The sequence
    \begin{equation*}
        \Omega G \xrightarrow{f_1} \Omega H \xrightarrow{k} Mf_1 \xrightarrow{f_2} G \xrightarrow{f_1} H
    \end{equation*}
    is exact in $\hgrh_*$ by Proposition \ref{prop:omega_Mf}.
    By Proposition \ref{prop:looped}, the sequence
    \begin{equation*}
        \Omega^2 G \xrightarrow{\Omega^2 f_1} \Omega^2 H \xrightarrow{\Omega k} \Omega (Mf_1) \xrightarrow{\Omega f_2} \Omega G
    \end{equation*}
    is also exact in $\hgrh_*$.
    Finally, by connecting these sequences and by induction, the theorem follows.
\end{proof}
\begin{rem}
    In \cite{LWYZ}, homotopy groups of digraphs, a variation of the GLMY homotopy groups has been introduced by Grigor'yan, Lin, Muranov, and Yau.
    They also proved the Puppe sequence holds under the homotopy of digraphs.
    The morphism they use is weak homotopy, i.e., the digraph map which induces the isomorphism on homotopy groups of all degrees.
    While the present paper is based on strict homotopy, and the analogous statement holds but technical assumptions are needed.
    Further study is needed to investigate the relation between their work and the present result.
\end{rem}
\bibliographystyle{amsalpha}
\bibliography{discrete_htp.bib}
\end{document}